\newcommand{\abs}[1]{\left\vert#1\right\vert}
\newcommand{\Abs}[1]{\big\vert#1\big\vert}
\newcommand{\norm}[1]{\left\|#1\right\|}  
\newcommand{\set}[1]{\left\{ #1 \right\}}
\newcommand{\brak}[1]{\left\langle #1 \right\rangle}
\newcommand{\dd}{\, {\rm d}}
\newcommand{\R}{\ensuremath{{\mathbb R}}}
\newcommand{\weak}{\ensuremath{{\rightharpoonup}}}
\renewcommand{\S}{\ensuremath{{\mathcal S}}}
\newcommand{\N}{\ensuremath{{\mathbb N}}}
\DeclareMathOperator{\eps}{\varepsilon}
\DeclareMathOperator{\embeds}{\hookrightarrow}
\newcommand{\beq}{\begin{equation}}
\newcommand{\eeq}{\end{equation}}
\newcommand{\beqs}{\begin{equation*}}
\newcommand{\eeqs}{\end{equation*}}
\newcommand{\bal}{\begin{equation}\begin{aligned}}
\newcommand{\eal}{\end{aligned}\end{equation}}
\newcommand{\bals}{\begin{equation*}\begin{aligned}}
\newcommand{\eals}{\end{aligned}\end{equation*}}
\newcounter{num} \numberwithin{num}{section}
\newtheorem{theorem}[num]{Theorem}
\newtheorem{proposition}[num]{Proposition}
\newtheorem{lemma}[num]{Lemma}
\newtheorem{corollary}[num]{Corollary}
\theoremstyle{definition}
\theoremstyle{remark}
\numberwithin{equation}{section}
\author{William Golding, Am\'elie Loher}
\title[Strong solutions of the Landau-Coulomb equation]{Local-in-time strong solutions of the homogeneous Landau-Coulomb equation with $L^p$ initial datum}
\date{\today}							
\address[William Golding]{\newline Department of Mathematics, \newline The University of Texas at Austin, Austin, TX 78712, USA}
\email{wgolding@utexas.edu}
\address[Amélie Loher]{\newline Department of Pure Mathematics and Mathematical Statistics, \newline University of Cambridge, Cambridge, CB3 0WA, United Kingdom}
\email{ajl221@cam.ac.uk}
\subjclass[2020]{35B45, 35B60, 35B65, 35K59, 35K55, 82C40, 82D10}
\keywords{Landau-Coulomb equation, kinetic theory, strong solutions, large data, local-in-time well-posedness, conditional regularity}
\thanks{\textbf{Acknowledgments:} The authors would like to thank Maria Gualdani for her contribution in proposing this problem, for fostering our collaboration, and for engaging in several insightful discussions.}
\thanks{\textbf{Funding:} W. Golding is partially supported by NSF grant DMS 1840314. A. Loher is funded by the Cambridge Trust and Newnham College scholarship.}
\begin{document}

\begin{abstract}
    We consider the homogeneous Landau equation with Coulomb potential and general initial data $f_{in} \in L^p$, where $p$ is arbitrarily close to $3/2$. We show the local-in-time existence and uniqueness of smooth solutions for such initial data. The constraint $p > 3/2$ has appeared in several related works and appears to be the minimal integrability assumption achievable with current techniques. We adapt recent ODE methods and conditional regularity results appearing in [arXiv:2303.02281] to deduce new short time $L^p \to L^\infty$ smoothing estimates. These estimates enable us to construct local-in-time smooth solutions for large $L^p$ initial data, and allow us to show directly conditional regularity results for solutions verifying \emph{unweighted} Prodi-Serrin type conditions. As a consequence, we obtain additional stability and uniqueness results for the solutions we construct.
\end{abstract}

\maketitle

\tableofcontents

\section{Introduction}
The Landau-Coulomb equation is one of the fundamental models of plasma physics that models the statistical evolution of a collisional plasma. In this manuscript, we consider the spatially homogeneous Landau-Coulomb equation, written as
\begin{equation}
	\partial_t f(t, v) = \mathcal{Q}(f,f)(t, v) \qquad (t, v) \in (0, \infty) \times \R^3,
\label{eq:landau1}
\end{equation}
where $t$ is the time variable, $v$ is the velocity variable, $f:\R^+ \times \R^3 \rightarrow \R^+$ is the unknown distribution function, and $\mathcal Q$ is the collision operator, given by 
\begin{equation}\label{eq:q}
	\mathcal Q(f,f)(v) := \frac{1}{8\pi}\nabla_v \cdot \left(\int_{\R^3} \frac{\Pi(v - v_*)}{\abs{v-v_*}} \left\{f_*\nabla f - f \nabla f_*\right\}\dd v_*\right), \qquad \Pi(z) := \rm{Id} - \frac{z \otimes z}{\abs z^2},
\end{equation}
with the standard notation $f = f(v), ~ f_* = f(v_*)$. While \eqref{eq:landau1} is not as physically relevant as the full inhomogeneous model, the study of \eqref{eq:landau1} already provides several mathematical challenges, and is a reasonable starting point for the study of the full inhomogeneous equation. Indeed, \eqref{eq:landau1} may be rewritten as the following quasi-linear, divergence-form parabolic equation:
\beq	
	\partial_t f = \nabla \cdot \left(A[f]\nabla f - \nabla a[f] f\right),
\label{eq:landau}
\eeq
where the nonlocal coefficients $A$ and $a$ are given by convolutions:
\begin{equation}\label{eq:coefficients}
A[f] = \frac{\Pi(v)}{8\pi |v|} \ast f, \qquad a[f] = (-\Delta)^{-1}f = \frac{1}{4\pi |v|} \ast f.
\end{equation}
It is also possible to rewrite \eqref{eq:landau} in non-divergence form as:
\begin{equation}\label{eq:landau-nondiv}
\partial_t f = A[f]:\nabla^2 f + f^2,
\end{equation}
which highlights one major challenge in studying this equation, namely the competition between the reaction term and the nonlinear, non-local diffusion term. Indeed, written in non-divergence form, \eqref{eq:landau-nondiv} suggests a comparison to the semi-linear heat equation, which is known to blow-up in finite time.

We recall that \eqref{eq:landau} formally conserves mass, momentum, and energy:
\begin{equation}\label{eqn:conservation}
    \frac{\dd}{\dd t}\int_{\R^3} \begin{pmatrix} 1 \\ v \\ |v|^2 \end{pmatrix} f(t,v) \dd v = 0.
\end{equation}
Moreover, solutions to \eqref{eq:landau} have decreasing Boltzmann entropy:
\begin{equation}\label{eqn:Htheorem}
    \frac{\dd}{\dd t}\int_{\R^3} f \log(f) \dd v \le 0.
\end{equation}
Consequently, the steady states of \eqref{eq:landau} may be characterized explicitly: $f \in L^1_2$ is a steady state of \eqref{eq:landau} if and only if $f$ is a Maxwellian distribution, given by
\beqs
    \mu_{\rho, u, \theta}(v) = \frac{\rho}{(2\theta\pi)^{3/2}}e^{-\frac{\abs{v-u}^2}{2\theta}},
\eeqs
where $\rho \in \R^+$ is the total mass, $u \in \R^3$ is the mean velocity, and $\theta \in \R^+$ is the temperature of the plasma. Additionally, the conserved quantities and decreasing entropy suggest $L^1_2 \cap L\log(L)$ is a natural functional space for the problem. Indeed, in this setting, the existence of very weak solutions to \eqref{eq:landau1}, typically termed H-solutions, was shown in \cite{Villani_Hsols}. These H-solutions satisfy the natural estimates \eqref{eqn:conservation} and \eqref{eqn:Htheorem}. Later, it was shown that $H$-solutions belong to $L^1_{loc}(\R^+;L^3_{-9})$ (recently improved to $L^1_{loc}(\R^+;L^3_{-5})$ by Ji in \cite{Ji}) and, in particular, solve \eqref{eq:landau1} in an appropriate distributional sense \cite{Desvillettes_HSolutions}. These solutions were then shown to propagate $L^1$ moments of all orders and stretched exponential moments in \cite{CarrapatosoDesvillettesHe}, enabling the authors to obtain $L^1$-type convergence to equilibrium results for the Coulomb potential case considered here. Despite recent progress on the eventual regularity and partial regularity of $H$-solutions \cite{DesvillettesHeJiang,GolseGualdaniImbertVasseur_PartialRegularity1,GolseImbertVasseur_PartialRegularity2}, regularity and uniqueness of $H$-solutions remains open.

On the other hand, if one starts with more regular, or more integrable initial datum, the situation is reversed. Uniqueness for bounded solutions, i.e. belonging to $L^1_{loc}(\R^+;L^\infty)$ was shown by Fournier in \cite{Fournier} via a probabilistic method. 
Moreover, applying the parabolic De Giorgi-Nash-Moser theorem and parabolic Schauder estimates appropriately, it is known that $L^\infty_{loc}(0,T;L^\infty(\R^3))$ strong solutions are instantaneously smooth. However, the existence theory of strong solutions is much less developed.

For large initial data\footnote{By which we mean far from the unique corresponding Maxwellian steady state}, until recently, the only result was that of \cite{ArsenevPeskov}: the authors show local-in-time existence of a strong solution for initial data in $L^\infty$. The $L^\infty$ norm is rather amenable to estimation using maximum principle type arguments, which neglect the helpful diffusive term in \eqref{eq:landau}. By contrast, it is significantly more difficult to close energy estimates in lower $L^p$ norms. Recently, in \cite{DesvillettesHeJiang}, the authors were able to prove a corresponding result for initial datum in a polynomially weighted $\dot{H}^1$ space. We mention also that existence of strong solutions for radial initial data belonging to $L^p$ with $p > 3/2$ is established in \cite{GualdaniGuillen1}. Finally, there are additional works in the rough data, inhomogeneous Landau-Coulomb setting, i.e. \cite{HendersonSnelson,HendersonSnelsonTarfulea,HendersonSnelsonTarfulea1} and the references therein, but the class of initial data considered is always more stringent than just $L^\infty$, which was addressed in \cite{ArsenevPeskov} for homogeneous initial data.

For small initial data, i.e. near equilibrium, there are more results. In \cite{CarrapatosoMischler}, global-in-time strong solutions are constructed for data close to equilibrium in a weighted $L^2$ sense, using spectral properties of the collision operator linearized about the Maxwellian. Additionally, in \cite{Guo, KimGuoHwang,DesvillettesHeJiang}, global strong solutions are constructed for data close to equilibrium in weighted $H^k$ for some $k \ge 8$, weighted $L^\infty$, and weighted $H^1$, respectively, using different methods. In \cite{GoldingGualdaniLoher}, the authors were able to relax these constraints to an unweighted $L^p$ notion of smallness, for $p > 3/2$.

The primary aim of this paper is to adapt the energy methods and smoothing estimates in \cite{GoldingGualdaniLoher} to show local existence of strong solutions for initial data merely in $L^p$, for any $p > 3/2$. As a secondary aim, we attempt to unify the smoothing estimates that exist within the literature, which are essential to our method. Accordingly, we deduce a uniqueness result for solutions in unweighted mixed space-time Lebesgue spaces, reminiscent of the classical Prodi-Serrin results in incompressible fluid mechanics.

It is rather natural to expect smoothing estimates for solutions of \eqref{eq:landau}, given the parabolic nature of the equation. This perspective has yielded several conditional results, beginning with \cite{GualdaniGuillen1}, which shows radial solutions regularize from $L^\infty_{loc}(\R^+;L^{\frac{3}{2}+})$ to $L^\infty_{t,v}$ and become instantaneously smooth, and \cite{Silvestre}, which extended this to non-radial solutions, at the expense of weights in velocity. This was reproved via new techniques in \cite{GualdaniGuillen2}, which interestingly suggests that the nonlinear diffusive effects of \eqref{eq:landau} are far stronger than the corresponding diffusive effects of the heat equation. Additionally, recent work in \cite{BenPorath} suggests some of these estimates can be localized in velocity. Finally, a framework of $L^\infty_{loc}(\R^+;L^{\frac{3}{2}+})$ plus some energy control appeared in \cite{AlonsoBaglandsDesvillettesLods} to obtain pointwise control for the closely related Landau-Fermi-Dirac equation. Subsequently, this technique was modified for the use of the Landau-Coulomb equation in \cite{GoldingGualdaniLoher} and reappeared in \cite{AlonsoBaglandDesvillettesLods_ProdiSerrin}, generalized to the full range of soft potentials and dimensions. We attempt to unite these smoothing results in Proposition \ref{prop:degiorgi} below, which is then used to build strong solutions.

\subsection{Main Results}
Before presenting our main result, we first establish some notation and revisit some basic properties of \eqref{eq:landau1}.
We recall that the formal conservation laws \eqref{eqn:conservation} imply mass, momentum, and energy are constant for any reasonable solution of \eqref{eq:landau}. Using that the Landau-Coulomb equation is translation invariant and has a two-parameter scaling invariance, we will assume that our initial datum $f_0(v)$ satisfies the normalization:
\begin{equation}\label{eq:normalization}
    \int_{\R^3} \begin{pmatrix} 1 \\ v \\ |v|^2\end{pmatrix} f_0(v) \dd v = \begin{pmatrix} 1 \\ 0 \\ 3\end{pmatrix}.
\end{equation}
This choice fixes the corresponding Maxwellian with same mass, momentum and energy as
\begin{equation*}
    \mu(v) := (2\pi)^{-\frac{3}{2}}e^{-\frac{\abs{v}^2}{2}}.
\end{equation*}
Throughout, we will use the Japanese bracket convention $\langle v\rangle := (1+\abs{v}^2)^{\frac{1}{2}}$ and employ the following convention to define polynomial weighted Lebesgue spaces: for $p \in [1, \infty)$ and $m \in \R$,
\begin{equation*}
    \norm{f}_{L^p_m(\R^3)}^p :=  \int_{\R^3} \abs{f(v)}^p\langle v\rangle^{m} \dd v, \qquad \norm{f}_{L^\infty_m(\R^3)} :=  \underset{v \in \R^3}{\mathrm{ess\,sup}}\, \langle v\rangle^{m}\abs{f(v)}.
\end{equation*}
In this notation, our main result reads as follows:
\begin{theorem}\label{thm:main}
Fix $p > \frac{3}{2}$ and let $m >\frac{9}{2}\frac{p-1}{p - \frac{3}{2}}$. For any given $M$, $H$ and $f_{in} \in L^1_m \cap L^p(\R^3)$ satisfying 
\begin{equation}\label{eq:bounds}
        \norm{f_{in}}_{L^1_m} \le M \qquad \text{and} \qquad \int_{\R^3} f_{in} \abs{\log(f_{in})} \;dv \le H,
\end{equation}
there exists $0 < T = T(p, m, M, H)$ such that equation \eqref{eq:landau} admits a smooth solution on $(0,T) \times \R^3$ with initial datum $f_{in}$ in the sense that $\lim_{t \rightarrow 0^+} \norm{f(t) - f_{in}}_{L^p} = 0$. This solution satisfies, for $t \in (0, T)$, the smoothing estimate
\begin{equation}\label{eq:inst_reg}
\|f(t)\|_{L^\infty} \le C(p,m,M,H)\left(1 + t^{-\beta^*}\right),
\end{equation}
where $\beta^* = \beta^*(p,m)$ is explicitly computable and satisfies $\frac{3}{2p} < \beta^* < 1$. Thus, $f$ is the unique solution to \eqref{eq:landau} in $L^1(0,T;L^\infty)$ with initial data $f_{in}$.
\end{theorem}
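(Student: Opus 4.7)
The plan is to obtain $f$ as the local uniform limit of a regularized sequence of classical solutions, with compactness supplied by the smoothing estimate of Proposition~\ref{prop:degiorgi}. First, mollify and truncate the initial datum to produce $f_{in}^\eps \in L^\infty(\R^3)$ with $f_{in}^\eps \to f_{in}$ in $L^p \cap L^1_m$ and with convergent Boltzmann entropy. The existence result of \cite{ArsenevPeskov} then yields a classical smooth solution $f^\eps$ on a maximal interval $(0, T_\eps^*)$. Mass, momentum, and energy are conserved, the entropy is non-increasing, and higher moments in $L^1_m$ are propagated on any fixed time interval using \cite{CarrapatosoDesvillettesHe}, with constants depending only on $p, m, M, H$ and not on $\eps$.

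The heart of the argument is a bootstrap on the $L^p$ norm. Assume on a subinterval $[0, T_0] \cap [0, T_\eps^*)$ that $\norm{f^\eps(t)}_{L^p} \le 2\norm{f_{in}}_{L^p}$. Together with the propagated $L^1_m$ and entropy bounds, Proposition~\ref{prop:degiorgi} gives the uniform smoothing bound $\norm{f^\eps(t)}_{L^\infty} \le C(1 + t^{-\beta^*})$ on this interval. Multiplying \eqref{eq:landau} by $(f^\eps)^{p-1}$, integrating by parts, and using $-\Delta a[f^\eps] = f^\eps$ yields
\beqs
\tfrac{1}{p}\tfrac{\dd}{\dd t}\norm{f^\eps}_{L^p}^p = -(p-1)\int_{\R^3} (f^\eps)^{p-2}\nabla f^\eps \cdot A[f^\eps]\nabla f^\eps\dd v + \tfrac{p-1}{p}\int_{\R^3}(f^\eps)^{p+1}\dd v.
\eeqs
Dropping the nonnegative diffusion term and using $\int (f^\eps)^{p+1}\dd v \le \norm{f^\eps}_{L^\infty}\norm{f^\eps}_{L^p}^p$, Gronwall's inequality gives $\norm{f^\eps(t)}_{L^p}^p \le \norm{f_{in}^\eps}_{L^p}^p \exp\bigl((p-1)\int_0^t \norm{f^\eps(s)}_{L^\infty}\dd s\bigr)$. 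Since $\beta^* < 1$, the exponent is small on a short time interval, so for some $T = T(p, m, M, H)$ the bootstrap improves to $\norm{f^\eps(t)}_{L^p} \le \tfrac{3}{2}\norm{f_{in}}_{L^p}$ on $[0, T]$ and in particular $T_\eps^* > T$.

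With these uniform estimates, classical interior parabolic Schauder theory (applied once the uniform $L^\infty$ bound and moment control give uniform ellipticity and H\"older continuity of $A[f^\eps]$ on compact sets) produces uniform $C^k_{\mathrm{loc}}$ bounds for $f^\eps$ on $(0, T] \times \R^3$. A diagonal Arzel\`a--Ascoli extraction yields a smooth limit $f$ solving \eqref{eq:landau} which inherits \eqref{eq:inst_reg}. The initial condition in $L^p$ is verified by combining weak $L^p$ convergence $f(t) \weak f_{in}$ as $t \to 0^+$ (from the distributional form of the equation together with the uniform $L^p$ bound) with the matching upper bound $\limsup_{t \to 0^+}\norm{f(t)}_{L^p} \le \norm{f_{in}}_{L^p}$ (obtained by passing to the limit in the Gronwall estimate above); uniform convexity of $L^p$ then upgrades this to strong convergence. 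Finally, since $\beta^* < 1$ we have $\int_0^T t^{-\beta^*}\dd t < \infty$, which places $f$ in $L^1(0, T; L^\infty)$, so the probabilistic uniqueness theorem of \cite{Fournier} applies.

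The principal difficulty lies in the bootstrap step, which hinges entirely on the strict inequality $\beta^* < 1$ in the time-singularity of the smoothing estimate, for all $p > 3/2$ and $m$ as in the hypothesis. Establishing Proposition~\ref{prop:degiorgi} with these sharp exponents, adapting and unifying the approaches of \cite{GualdaniGuillen1, GualdaniGuillen2, Silvestre, GoldingGualdaniLoher}, is the technically demanding part of the argument; once it is in hand, the construction of strong solutions follows from the soft approximation scheme above.
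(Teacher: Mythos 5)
Your construction follows the same overall scaffolding as the paper (regularize the data, obtain uniform short-time estimates, extract a limit by parabolic H\"older/Schauder compactness, recover the initial datum from weak continuity plus an $L^p$-norm upper bound, and close uniqueness with Fournier's theorem), but the heart of your argument --- closing the $L^p$ bound by a bootstrap through the $L^\infty$ smoothing estimate and a Gronwall inequality --- is genuinely different from the paper, which closes the $L^p$ bound \emph{first and directly} via the Bernoulli-type differential inequality of Proposition~\ref{prop:ode}, and only afterwards invokes Proposition~\ref{prop:degiorgi}. Your route is appealing because it exploits the strict inequality $\beta^* < 1$ in an elegant way. However, there is a real gap in your bootstrap step: Proposition~\ref{prop:degiorgi}, as stated and proved, bounds $\|f(t)\|_{L^\infty}$ in terms of $A_0 = \int_0^t\!\int_{\R^3} f^{p+1}\,\dd v\,\dd s$, \emph{not} in terms of $\sup_t\|f(t)\|_{L^p}$. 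Your hypothesis $\|f^\eps(t)\|_{L^p} \le 2\|f_{in}\|_{L^p}$ together with the moment and entropy bounds does not by itself control $A_0$, so you cannot quote the smoothing bound as you do. To bridge the gap you must first recover the dissipation $\int_0^t\!\int_{\R^3}\brak{v}^{-3}\abs{\nabla f^{p/2}}^2$ from the $L^p$ energy identity combined with the weighted Sobolev inequality of Lemma~\ref{lem:poincare} and Young's inequality (exactly the manipulations in \eqref{eq:landau-test}--\eqref{eq:landau-test1}); only then does the weighted Sobolev inequality give control of the $L^{p+1}_{t,v}$ norm and hence of $A_0$. In other words, the computation of Proposition~\ref{prop:ode} is not avoidable --- your bootstrap just repackages it with the integration made trivial by the a priori $L^p$ bound.

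Two smaller remarks. First, you build approximations via \cite{ArsenevPeskov} with $L^\infty$ initial data, whereas the paper uses \cite{HendersonSnelsonTarfulea} with Schwartz data; Proposition~\ref{prop:degiorgi} and Lemma~\ref{lem:Alower} are stated for Schwartz class solutions, and the qualitative rapid decay is what justifies testing the equation with $(f_\ell^+)^{p-1}$ without boundary terms, so you would need an additional regularity/decay argument before citing those results for Arsenev--Peskov solutions. Second, the Gronwall estimate you derive (dropping the diffusion) is weaker than what the paper extracts: the paper retains the dissipation term in \eqref{eq:apriori}, which is used later in the proof (e.g.\ to bound $A_0$, and in the uniqueness section). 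Both points are fixable, but they should not be glossed over.
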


As a consequence, we deduce the following continuation criterion, which has appeared previously in \cite{GualdaniGuillen1} for radial solutions.

\begin{corollary}\label{thm:continuation}
    Fix $p > \frac{3}{2}$ and let $m >\frac{9}{2}\frac{p-1}{p - \frac{3}{2}}$ and $f_{in}$ an initial data satisfying the assumptions of Theorem \ref{thm:main}. Let $f$ be the corresponding solution constructed in Theorem \ref{thm:main}. Then, the maximal time $T^*$ for which $f$ can be uniquely continued as a smooth solution is characterized by
    \begin{equation}\label{eq:continuation-2}
        \lim_{t \nearrow T^*} \norm{f(t)}_{L^p} = +\infty.
    \end{equation}
\end{corollary}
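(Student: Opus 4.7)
The approach is a standard restart/continuation argument. Suppose toward a contradiction that $T^* < \infty$ and $K := \liminf_{t \to T^{*-}}\|f(t)\|_{L^p} < \infty$. The plan is to select a time $t_0$ arbitrarily close to $T^*$ at which $\|f(t_0)\|_{L^p} \le K+1$, verify the remaining hypotheses of Theorem~\ref{thm:main} at $t_0$ with constants independent of the choice of $t_0 \in [0, T^*)$, and then apply Theorem~\ref{thm:main} with initial datum $f(t_0)$ to obtain a smooth extension of $f$ beyond $T^*$, contradicting the maximality of $T^*$.

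The crucial technical step is uniform-in-$t$ control on $[0, T^*)$ of the entropy and of the weighted $L^1_m$ moment. For the entropy, the $H$-theorem \eqref{eqn:Htheorem} gives $\int f(t)\log f(t)\dd v \le H$, while the standard bound $\int f(\log f)_- \dd v \le \int f |v|^2\dd v$ combined with the conservation of mass and energy \eqref{eq:normalization} controls the negative part. This yields $\int f(t)|\log f(t)|\dd v \le H'$ on $[0,T^*)$ for a constant $H' = H'(H)$. For the $L^1_m$ moment, testing the weak formulation of \eqref{eq:landau} against $\langle v \rangle^m$ produces a Grönwall-type inequality for $\|f(t)\|_{L^1_m}$ whose driving coefficient can be controlled by interpolation between the $L^p$ bound (which along a suitable sequence is at most $K+1$) and the conserved mass and energy, yielding $\|f(t)\|_{L^1_m} \le M' = M'(p,m,K,M,T^*)$ uniformly on $[0,T^*)$; see \cite{CarrapatosoDesvillettesHe} for the underlying moment-propagation estimates.

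With these uniform bounds, let $\tau := T(p, m, M', H') > 0$ be the existence time provided by Theorem~\ref{thm:main} corresponding to $L^1_m$-bound $M'$, entropy bound $H'$, and $L^p$-bound $K+1$. Pick $t_0 \in (T^* - \tau/2, T^*)$ with $\|f(t_0)\|_{L^p} \le K+1$. Applying Theorem~\ref{thm:main} with initial datum $f(t_0)$ produces a smooth solution $\tilde f$ on $[t_0, t_0 + \tau]$ with $\tilde f(t_0) = f(t_0)$ in the $L^p$ sense. The uniqueness clause at the end of Theorem~\ref{thm:main}, valid in $L^1(t_0, T; L^\infty)$ for $T < t_0 + \tau$, then forces $\tilde f \equiv f$ on $[t_0, T^*)$ (both functions lie in $L^1(t_0, T; L^\infty)$ thanks to the smoothing estimate \eqref{eq:inst_reg} applied to each). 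Hence $\tilde f$ provides a smooth continuation of $f$ past $T^*$, the required contradiction. Consequently $\liminf_{t \to T^{*-}}\|f(t)\|_{L^p} = \infty$, which is equivalent to \eqref{eq:continuation-2}.

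The main obstacle in executing this plan is the uniform-in-$t_0$ moment propagation: the estimate must be driven solely by the $L^p$ norm (assumed bounded) and by the conserved mass and energy, and must not depend on $\|f(t)\|_{L^\infty}$, which a priori can blow up as $t \to T^{*-}$. This is possible precisely because the moment weight $m > \frac{9}{2}\frac{p-1}{p-\frac{3}{2}}$ is chosen large enough to absorb the interpolation losses arising in the $L^1_m$ estimate, mirroring the role it plays in the hypotheses of Theorem~\ref{thm:main}.
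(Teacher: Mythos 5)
Your proposal follows essentially the same restart-and-glue strategy as the paper: assume the $L^p$ norm remains bounded as $t \nearrow T^*$, use uniform-in-time control of the entropy and of the $L^1_m$ moment to invoke Theorem~\ref{thm:main} with initial datum $f(t_0)$ for $t_0$ close to $T^*$, and then use the Fournier $L^1_t L^\infty_v$ uniqueness to identify the restarted solution with $f$ on the overlap, yielding a continuation past $T^*$. The paper proceeds identically, phrasing the hypothesis as $\limsup_{t\nearrow T^*}\|f(t)\|_{L^p} < \infty$; your $\liminf$ formulation is the logically tighter one for negating \eqref{eq:continuation-2}, and works for the same reasons.

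One inaccuracy in your exposition: you present the uniform $L^1_m$ moment bound as the ``main obstacle'' and propose to obtain it by testing against $\langle v\rangle^m$ and controlling the driving coefficient by interpolating with the $L^p$ norm. This is more work than needed and slightly misrepresents the structure of the argument. Lemma~\ref{lem:moments} (the Carrapatoso--Desvillettes--He moment propagation) already gives $\|f(t)\|_{L^1_m} \le C(1+t)$ for \emph{any} weak solution with no reference to $\|f\|_{L^p}$ or $\|f\|_{L^\infty}$; the constant depends only on the initial data. So the moment bound on $[0,T^*)$ is unconditional and is not where the difficulty lives; the $L^p$ bound is needed only to guarantee a uniform existence time when restarting. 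Similarly, your justification that $f\in L^1(t_0,T;L^\infty)$ for $T<T^*$ should not cite \eqref{eq:inst_reg} directly (which is stated only on the initial existence window) but rather the fact that on the maximal interval, $f$ is by construction a smooth solution that is locally bounded on compact sub-intervals of $(0,T^*)$. With these two small corrections your argument matches the paper's.
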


The proof of Theorem \ref{thm:main} is based on the widely used lower bound $A[f] \gtrsim \brak{v}^{-3}$ that suggests that \eqref{eq:landau} should behave like the semi-linear heat equation  
\begin{equation*}
    \partial_t f = \Delta f + f^2.
\end{equation*}
Indeed, the smoothing estimate \eqref{eq:inst_reg} is consistent with the well-known $L^p(\R^3) \to L^\infty(\R^3)$ heat regularization estimate,
\begin{equation*}
    \norm{e^{t\Delta}f_{in}}_{L^\infty} \lesssim t^{-\frac{3}{2p}}\norm{f_{in}}_{L^p},
\end{equation*}
up to a correction accounting for the degenerate diffusion at large velocities encoded in the parameter $m$. Because the semi-linear equation is only locally well-posed for arbitrary initial data in $L^p$ for $p \ge 3/2$, it does not seem that one can go beyond the limitation $p > 3/2$ using solely the lower bound $A[f] \gtrsim \brak{v}^{-3}$ and not relying on the full strength of the diffusion. Indeed, \cite{GualdaniGuillen2} suggests that the heat smoothing estimate is suboptimal for Landau, but we cannot see this difference using methods relying on the lower bound $A[f] \gtrsim \brak{v}^{-3}$.

As a final consequence of our methods, we are able to adapt our a priori estimates to apply to rather general weak solutions to \eqref{eq:landau} satisfying a Prodi-Serrin type condition in \textit{unweighted} Lebesgue spaces. We prove a quantitative conditional regularity result which implies a more natural uniqueness class for the solutions constructed in Theorem \ref{thm:main}, namely $L^\infty(0,T;L^p)$, and implies the stability of these solutions under weak $L^r(0,T;L^p)$ convergence:
\begin{theorem}\label{thm:prodi_serrin}
Suppose $g:[0,T]\times \R^3 \to \R^+$ is an $H$-solution\footnote{More precisely, $g$ is a solution in the sense of distributions to the collisional form of the homogeneous Landau equation with constant mass, momentum, and energy, decreasing entropy, and bounded entropy production. As a consequence of \cite{Desvillettes_HSolutions}, the bounded entropy production implies $\nabla \sqrt{g} \in L^2(0,T;L^2_{-3})$.} to \eqref{eq:landau} such that
\begin{itemize}
    \item $g\in L^r(0, T; L^p(\R^3))$ for some $r > \frac{2p}{2p-3}$ and $p > 3/2$;
    \item $g\in W^{1,\infty}(\Omega)$ for each $\Omega$ compactly contained in $(0,T]\times \R^3$;
    \item and $\lim_{t \to 0^+} g(t) = g_{in}$ in the sense of distributions, where $g_{in} \in L^1_m \cap L\log L$ for some $m > m^*(p,r)$.
\end{itemize}
Then, $g \in L^1(0,T;L^\infty)$ and $g$ is smooth. Consequently, $g$ is the unique smooth solution to \eqref{eq:landau} in $L^r(0,T;L^p)$ with initial data $g_{in}$.
\end{theorem}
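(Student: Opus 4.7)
The strategy is to apply the $L^p \to L^\infty$ smoothing estimate of Proposition \ref{prop:degiorgi} directly to the $H$-solution $g$, and then to combine the resulting bound with standard parabolic regularity and Fournier's uniqueness theorem. First I would confirm that $g$ satisfies the hypotheses of Proposition \ref{prop:degiorgi} on every subinterval $[t_0, T]$ with $t_0 > 0$: the local $W^{1,\infty}$ assumption makes $g$ a classical solution away from $t = 0$, and propagation of $L^1_m$ moments for $H$-solutions (as in \cite{CarrapatosoDesvillettesHe}) combined with the decreasing entropy provides uniform control of the moment and entropy quantities that appear in the smoothing constant, with bounds depending only on $M$, $H$, and $m$.

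The Prodi-Serrin condition $g \in L^r(0,T;L^p)$ with $r > \tfrac{2p}{2p-3}$ is precisely at the scaling threshold where a heat-type smoothing estimate can be bootstrapped. For each $t_0 > 0$ with $g(t_0) \in L^p$ (a.e.\ $t_0$), Proposition \ref{prop:degiorgi} should yield a quantitative bound of the schematic form
\[
    \|g(t)\|_{L^\infty} \le C\bigl(1 + (t-t_0)^{-\beta^*}\bigr), \qquad t \in (t_0, T],
\]
with constant $C = C(M, H, \|g\|_{L^r(t_0, T; L^p)})$ and exponent $\beta^* = \beta^*(p,r,m) \in (0,1)$. Since $\beta^* < 1$, integrating in time over $[t_0, T]$ produces a bound for $\|g\|_{L^1(t_0,T;L^\infty)}$ independent of $t_0$; letting $t_0 \to 0^+$ then gives $g \in L^1(0,T;L^\infty)$, as desired.

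Once $g \in L^1(0,T;L^\infty)$, the diffusion matrix $A[g]$ is locally uniformly elliptic on bounded velocity regions, so the usual parabolic De Giorgi-Nash-Moser theory applied to \eqref{eq:landau-nondiv}, followed by Schauder bootstrapping, shows that $g$ is $C^\infty$ on $(0,T) \times \R^3$. For uniqueness within $L^r(0,T;L^p)$, any competing smooth solution in this class would, by the same smoothing argument, belong to $L^1(0,T;L^\infty)$, and Fournier's uniqueness theorem \cite{Fournier} (equivalently, the uniqueness statement built into Theorem \ref{thm:main}) then forces it to coincide with $g$ on $(0,T)$.

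The main obstacle is ensuring that Proposition \ref{prop:degiorgi} genuinely closes in terms of the \emph{unweighted} $L^r L^p$ norm of $g$ together with only the $L^1_m$ moment and $L\log L$ entropy bounds; this requires balancing the reaction contribution $\int g^{q+1}$ against the degenerate dissipation $A[g] \gtrsim \langle v\rangle^{-3}$ via a weighted Sobolev interpolation, which is exactly where the moment threshold $m > m^*(p,r)$ and the precise value of $\beta^*$ enter. A secondary, more routine point is justifying the moment propagation and the energy manipulations for the $H$-solution $g$, but the $W^{1,\infty}$ hypothesis on compact subsets of $(0,T] \times \R^3$ and the $L^2(0,T;L^2_{-3})$ bound on $\nabla\sqrt{g}$ (noted in the footnote) are more than enough to make every computation rigorous on $[t_0, T]$.
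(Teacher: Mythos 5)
Your outline captures the broad shape of the argument (smoothing estimate plus Fournier's uniqueness plus bootstrap), but it has a genuine gap at its center: you never show how the quantity that actually controls Proposition~\ref{prop:degiorgi} --- namely $A_0 = \int_0^t\int_{\R^3} g^{p+1}\dd v\dd\tau$ --- is estimated in terms of the Prodi-Serrin norm $\|g\|_{L^r(0,T;L^p)}$ and the moment/entropy data, nor do you show that the resulting singularity in $t$ is integrable. These are the two things the theorem actually asserts, and they are precisely where the threshold $r > \frac{2p}{2p-3}$ enters. In particular, your claimed bound $\|g(t)\|_{L^\infty} \le C(M,H,\|g\|_{L^r(t_0,T;L^p)})\bigl(1+(t-t_0)^{-\beta^*}\bigr)$ with some $\beta^*\in(0,1)$ is not what Proposition~\ref{prop:degiorgi} provides, and the constant there is not as you claim: it depends on $A_0$, and a restart at $t_0$ would make $A_0$ depend on $\|g(t_0)\|_{L^p}$, which is not uniformly bounded as $t_0\searrow 0$.

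What is missing is the intermediate step (Step~1 of the paper's Section~\ref{sec:prodiserrin}): a Grönwall argument using the weighted Sobolev interpolation and the hypothesis $g \in L^r(0,T;L^p)$ to obtain the quantitative energy decay $\sup_{\tau>t}\|g(\tau)\|_{L^p}^p + \int_t^T\int \brak{v}^{-3}|\nabla g^{p/2}|^2 \lesssim t^{-p/r}$. It is the finiteness of $\int_0^T\|g(\tau)\|_{L^p}^{p\alpha_1-p}\dd\tau$ --- which requires choosing $m$ large enough so that $p\alpha_1 - p < r$ --- that makes the Grönwall factor finite; this is where the subcriticality $r > \frac{2p}{2p-3}$ is used. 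Only after this $t^{-p/r}$ bound on the energy functional does the weighted Sobolev estimate control $A_q(t) = \int_t^T\int g^q$ with an explicit algebraic rate in $t$, and feeding that into Proposition~\ref{prop:degiorgi} yields $\|g(t)\|_{L^\infty} \lesssim t^{\omega_1} + t^{\omega_2}$. One must then verify, by a delicate exponent computation in the limits $q\to 5p/3$, $m\to\infty$, that both $\omega_1>-1$ and $\omega_2 > -1$ exactly when $r > \frac{2p}{2p-3}$; this is not a consequence of some generic $\beta^*<1$ but of the precise arithmetic. You flag this as ``the main obstacle'' and then assert it can be overcome, but the resolution is the substance of the theorem and must be carried out. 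There is also a subsidiary rigor issue that you wave away: using $g^{p-1}$ as a test function is not permitted under the stated regularity, and the paper's rigorous argument requires a truncated test function $\varphi_R^2 g^{p-1}$ together with the $\varepsilon$-Poincar\'e inequality of \cite{AlonsoBaglandDesvillettesLods_ProdiSerrin} to absorb the reaction term locally; this is where the local Lipschitz hypothesis and the $\nabla\sqrt{g}\in L^2(0,T;L^2_{-3})$ bound are genuinely used, not merely ``more than enough.''
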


The role of the qualitative regularity of $g$, namely the locally Lipschitz assumption, is merely to guarantee that truncations of $g$ are sufficiently regular to be used as a test functions in a weak formulation of \eqref{eq:landau}. This assumption can be replaced by an appropriate notion of suitable weak solution, as in the seminal work of Caffarelli, Kohn, and Nirenberg on the partial regularity for the incompressible Navier-Stokes equations. However, we expect that in the $L^r(0,T;L^p)$ setting, this assumption is unnecessary and could potentially be removed by using a mollified version of $g$ as a test function and sending the mollification parameter to $0^+$ using the DiPerna-Lions commutator estimates, another tool frequently used in fluid mechanics. We do not investigate these directions further.

One can explicitly compute that $m^* = \frac{9}{2}\frac{p-1}{p-3/2}$ when $r = \infty$. As a result, we find that the solution constructed in Theorem \ref{thm:main} is the unique locally Lipschitz solution belonging to $L^\infty(0,T;L^p)$. This uniqueness class is the same one considered by Chern and Gualdani in \cite{ChernGualdani}, up to our additional qualitative regularity assumption. Although the estimates presented in \cite{ChernGualdani} are sound as a priori estimates, it remains unclear whether all low regularity solutions satisfy the same estimates in the absence of additional regularity hypotheses. Our assumption that g is locally Lipschitz is used in the proof of Theorem \ref{thm:prodi_serrin} to rigorously justify the uniqueness result of \cite{ChernGualdani}, which is especially intricate for $p \in (3/2, 2)$.


The term Prodi-Serrin condition originates in \cite{AlonsoBaglandDesvillettesLods_ProdiSerrin}, where the authors show the following conditional regularity result, analogous to the classical Prodi-Serrin result for the three-dimensional incompressible Navier-Stokes equation: If $f$ is a suitable solution to \eqref{eq:landau} with initial data in $L^1_m$ for some sufficiently large $m$ and
\begin{equation}\label{eq:weighted_assumption}
        \langle \cdot \rangle^{3} f \in L^r((0, T); L^p(\R^3)), \quad \frac{2}{r} + \frac{3}{p} = 2,\quad 1 \le r,\, p \le \infty, \quad p\neq \frac{3}{2},
\end{equation}
then $f\in L^\infty(t,T;L^\infty)$ for each $t > 0$ and, consequently, $f$ is instantaneously smooth. The core of their argument is a novel proof of an $\eps$-Poincar\'{e} inequality, which we use in Section \ref{sec:prodiserrin}. Note that except in the case $r = 1,\, p = \infty$, it remains unclear whether assumption \eqref{eq:weighted_assumption} is enough to guarantee uniqueness. The main contribution of Theorem \ref{thm:prodi_serrin} is that for the full range subcritical range of parameters, namely $\frac{2}{r} + \frac{3}{p} < 2$, one can rigorously prove that uniqueness holds as a consequence of smoothing estimates. We expect that uniqueness in the critical case, i.e. under assumption \eqref{eq:weighted_assumption}, requires a significantly different proof, because by analogy to the semi-linear heat equation, one expects only the estimate
\begin{equation*}
    \norm{f(t)}_{L^\infty} \le \frac{C}{t},
\end{equation*}
which does not imply $f \in L^1(0,T;L^\infty)$.

\subsection{Outline of the Paper}
In the next section, we state a few preliminary results, which will be used without proof. 
We then derive two a priori estimates: In Section \ref{sec:ode}, we use an ODE argument to show propagation of $L^p$ norms of the initial datum for $p > 3/2$ for some quantitative short time. In Section \ref{sec:degiorgi}, we use a new De Giorgi type argument to show quantitative $L^\infty$ regularization, for sufficiently integrable solutions. A combination of these estimates yields the quantitative short-time smoothing estimates claimed in \eqref{eq:inst_reg}. Then, in Section \ref{sec:proof}, we use a compactness argument and the estimates \eqref{eq:inst_reg} to build our smooth solutions and conclude the proof of Theorem \ref{thm:main} and Corollary \ref{thm:continuation}. Finally, in Section \ref{sec:prodiserrin}, we show how to use our a priori estimates to obtain the result in Theorem \ref{thm:prodi_serrin}.

\section{Preliminaries}\label{sec:prelims}

In this section, we collect some known results on the Landau equation that will be used in the proof of Theorem \ref{thm:main}. This includes prior results on local-in-time well-posedness, which will be used to build solutions, basic estimates on the coefficients $a[f]$ and $A[f]$ appearing in \eqref{eq:landau}, and results on the propogation of moments. Additionally, we include the key weighted Sobolev inequality that we use frequently below.

We begin with a local-in-time well-posedness result taken from \cite{HendersonSnelsonTarfulea} and used below. We present the result in a simplified form, suitable for our purposes:
\begin{theorem}[\protect{From\cite[Theorem 1.2 and Theorem 1.3]{HendersonSnelsonTarfulea}}]\label{thm:henderson_snelson_tarfulea}
Suppose $f_{in} \in \S(\R^3)$ is non-negative. Then, there is a time $0 < T^* \le \infty$ and a function $f:[0,T^*)\times \R^3 \rightarrow \R^+$ with $f\in C^\infty(0,T^*;\S(\R^3))$ such that $[0,T^*)$ is the maximal time interval on which $f$ is the unique classical solution to \eqref{eq:landau} with initial datum $f_{in}$. Furthermore, if $T^* < \infty$, then $f$ satisfies
\begin{equation*}
\lim_{t \nearrow T^*} \|f(t)\|_{L^\infty} = +\infty.
\end{equation*}
\end{theorem}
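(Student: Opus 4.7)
The plan is to prove this by a Picard-type iteration scheme to produce a smooth solution on a short time interval, followed by a parabolic bootstrap argument to obtain the continuation criterion. Since the target space is $\S(\R^3)$, the argument benefits from the preservation of both smoothness and rapid velocity decay along the flow.

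For short-time existence, I would linearize \eqref{eq:landau-nondiv} and iterate: given $f^{(n)}$, define $f^{(n+1)}$ as the solution to the linear parabolic problem
\begin{equation*}
\partial_t f^{(n+1)} = A[f^{(n)}] : \nabla^2 f^{(n+1)} + f^{(n)} f^{(n+1)}, \qquad f^{(n+1)}|_{t=0} = f_{in}.
\end{equation*}
Schwartz initial data ensure enough mass, momentum, and energy are controlled so that $A[f^{(n)}] \gtrsim \brak{v}^{-3}$ on the iterates, making the linear equation uniformly parabolic on compact sets. Classical parabolic theory then produces a smooth solution $f^{(n+1)}$. To close the iteration, I would work in a weighted Sobolev norm of the form $H^k_m$ with $k$ and $m$ large, and combine energy estimates for derivatives $\partial^\alpha f^{(n+1)}$ with moment propagation estimates. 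On a short time interval (whose length depends only on $\|f_{in}\|_{H^k_m}$), the iterates stay in a bounded ball and the map is a contraction, yielding a unique smooth, rapidly decaying fixed point. Uniqueness of the classical solution follows by a standard energy estimate on the difference $f_1 - f_2$ of two solutions, using smoothness of both to bound the nonlinear terms.

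For the continuation criterion, let $T^* \le \infty$ be the supremum of times of existence of a smooth Schwartz-valued solution starting from $f_{in}$. Suppose, for contradiction, that $T^* < \infty$ and $\limsup_{t \to T^*} \|f(t)\|_{L^\infty} = M < \infty$. Then I would first propagate moments $\|f(t)\|_{L^1_m}$ up to $T^*$: multiplying \eqref{eq:landau} by $\brak{v}^m$ and integrating leads to a differential inequality whose right-hand side depends only on $M$ and lower moments, giving a uniform bound. With $\|f\|_{L^\infty}$ and sufficient moments under control up to $T^*$, the coefficients $A[f]$ are uniformly elliptic on compact sets and Hölder continuous, so the parabolic De Giorgi--Nash--Moser theorem yields $C^\alpha$ estimates on $f$, and then parabolic Schauder estimates bootstrap to uniform $C^{k,\alpha}$ control on any $[T^*/2, T^*) \times B_R$. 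Similarly, weighted energy estimates propagate the Schwartz decay. Hence $f$ extends continuously to $t = T^*$ as a Schwartz function, and applying the short-time existence result at $t = T^*$ produces a smooth extension past $T^*$, contradicting maximality.

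The main obstacle is making all the a priori estimates (moment propagation, Hölder and Schauder estimates, decay at infinity) \emph{quantitative and uniform up to $T^*$} assuming only a bound on $\|f\|_{L^\infty}$. The key structural inputs that make this possible are the lower bound $A[f] \gtrsim \brak{v}^{-3}$, which is valid whenever $f$ has fixed mass and second moment, and the fact that moment ODE's depend only on lower-order moments and the $L^\infty$ norm. These ingredients, combined with standard parabolic regularity theory once coefficients are under control, force the blow-up scenario to manifest itself through the $L^\infty$ norm, as claimed.
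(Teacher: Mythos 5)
The paper does not contain a proof of this theorem: it is imported from Henderson, Snelson, and Tarfulea \cite{HendersonSnelsonTarfulea} and used as a black box to generate smooth, rapidly decaying approximate solutions in Section~\ref{sec:proof}. There is therefore no in-paper argument to compare your attempt against, so I comment on the attempt on its own terms.

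Your outline is a plausible high-level plan, but there is a genuine gap in the short-time existence step. You claim $A[f^{(n)}] \gtrsim \brak{v}^{-3}$ holds for the Picard iterates because ``Schwartz initial data ensure enough mass, momentum, and energy are controlled.'' As Lemma~\ref{lem:Alower} in this paper makes explicit, the coercivity of $A[f]$ requires not only normalized mass, momentum, and energy but also an upper bound on the Boltzmann entropy of $f$ (equivalently, a non-concentration condition); without it, $f$ could concentrate along a line and $A[f]$ would degenerate in the corresponding direction. Your iterates $f^{(n+1)}$ solve a \emph{linearized} equation, so they do not conserve mass, momentum, and energy, do not obey the $H$-theorem, and hence inherit none of these constraints automatically. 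A standard fix is to iterate instead in divergence form (which at least conserves mass exactly) and prove a stability estimate showing the iterates stay in a small $L^1_2\cap L^\infty$ neighborhood of $f_{in}$ on a short time interval, from which a uniform lower bound on $A[f^{(n)}]$ can be recovered; but this must be argued rather than assumed. A smaller issue: your contradiction argument for the continuation criterion, assuming $\limsup_{t\nearrow T^*}\|f(t)\|_{L^\infty}<\infty$, only yields $\limsup_{t\nearrow T^*}\|f(t)\|_{L^\infty}=\infty$, which is a priori weaker than the stated $\lim=\infty$. Upgrading $\limsup$ to $\lim$ requires a quantitative version of the local existence step showing the existence time is controlled by $\|f_{in}\|_{L^\infty}$ and the conserved quantities alone, which is precisely the piece your weighted Sobolev fixed-point argument does not directly deliver.
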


The following lemma controls the degenerate diffusion associated to \eqref{eq:landau} and quantifies the parabolic nature associated to the Landau equation when written in the form \eqref{eq:landau}. Each of the upper and lower coefficient bounds stated here are now standard and can be found in many places in the literature.
\begin{lemma}[From \protect{\cite[Lemma 3.1]{BedrossianGualdaniSnelson}}, \protect{\cite[Lemma 2.1]{GoldingGualdaniZamponi}}, \protect{\cite[Lemma 3.2]{Silvestre}}]\label{lem:Alower}
Suppose $f\in L^1_2(\R^3)$ is non-negative, satisfies the normalization \eqref{eq:normalization}, and has finite Boltzmann entropy, $\int f\log(f) \le H$. Then, there is a constant $c_0 = c_0(H)$ such that $A[f]$ satisfies the pointwise bound,
\begin{equation*}
\abs{A[f]} \ge \frac{c_0}{1 + |v|^3}.
\end{equation*}
Furthermore, if $f\in L^p$ for $3/2 < p \le \infty$, then
\begin{equation*}
\|A[f]\|_{L^\infty} \le C\|f\|_{L^1}^{\frac{2}{3}\frac{p - \frac{3}{2}}{p - 1}}\|f\|_{L^p}^{\frac{1}{3} \frac{p}{p-1}} 
\end{equation*}
Moreover, if $3 < p \le \infty$, then 
\begin{equation*}
    \norm{\nabla a[f]}_{L^\infty} \le C\|f\|_{L^1}^{\frac{p-3}{3(p-1)}}\|f\|_{L^p}^{\frac{2p}{3(p - 1)}} \qquad \text{and} \qquad \norm{\nabla a[f]}_{L^3} \le C\norm{f}_{L^{3/2}} 
\end{equation*}
\end{lemma}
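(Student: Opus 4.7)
The plan is to treat the pointwise lower bound on $A[f]$ and the upper bounds separately, as their proofs are essentially independent. For the lower bound, I would start from the quadratic form representation
\begin{equation*}
e \cdot A[f](v) e = \frac{1}{8\pi}\int_{\R^3} \frac{f(v_*)}{|v-v_*|}\left(1 - \left(\frac{(v-v_*)\cdot e}{|v-v_*|}\right)^2\right) dv_*, \qquad |e| = 1,
\end{equation*}
so that $\lambda_{\min}(A[f](v))$ is the infimum of the right-hand side over $|e|=1$. The integrand vanishes exactly when $v_*$ lies on the line through $v$ in direction $e$, so the task is to rule out concentration of $f$ on a thin tubular neighborhood of such a line. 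My strategy would be to restrict integration to a fixed ball $|v_*| \leq R_0$ on which the energy bound yields $\int_{|v_*|\leq R_0} f \geq 1/2$, and then to excise from that ball a thin cylinder of radius $\delta$ about the line $v + \R e$. On the complement of this cylinder, the perpendicular distance from $v_*$ to the line is at least $\delta$, so the angular factor is at least $\delta^2/|v-v_*|^2 \gtrsim \delta^2\langle v\rangle^{-2}$, and $|v-v_*|^{-1} \gtrsim \langle v\rangle^{-1}$; the integrand is therefore $\gtrsim \delta^2\langle v\rangle^{-3} f(v_*)$. Jensen's inequality applied to $\int f\log f\leq H$ delivers the de la Vall\'{e}e--Poussin control $\int_E f \lesssim H/\log(1/|E|)$ on small sets $E$, so a choice of $\delta$ depending only on $H$ ensures that the mass inside the cylinder is at most $1/4$. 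The remaining mass then gives $\lambda_{\min}(A[f](v)) \gtrsim c_0(H)\langle v\rangle^{-3}$.

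For the upper bounds, I would rely on a single splitting-at-radius argument following the Young-convolution heuristic. Using the pointwise estimate $|A[f](v)| \lesssim (|v|^{-1}\ast f)(v)$, one splits
\begin{equation*}
|A[f](v)| \lesssim \int_{|w|<R}\frac{f(v-w)}{|w|}\, dw + \int_{|w|\geq R}\frac{f(v-w)}{|w|}\, dw \lesssim \|f\|_{L^p}\, R^{\frac{3}{p'}-1} + \|f\|_{L^1}\, R^{-1},
\end{equation*}
where the first integral uses H\"{o}lder together with the fact that $|w|^{-1}\in L^{p'}(B_R)$ exactly when $p' < 3$, i.e.\ $p > 3/2$. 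Balancing the two terms in $R$ produces $R^{3/p'} \sim \|f\|_{L^1}/\|f\|_{L^p}$, and substituting $p' = p/(p-1)$ recovers the claimed exponents $\frac{2}{3}\frac{p-3/2}{p-1}$ and $\frac{1}{3}\frac{p}{p-1}$. The same scheme applied to the pointwise bound $|\nabla a[f](v)| \lesssim (|v|^{-2}\ast f)(v)$ yields the $L^\infty$ estimate for $p > 3$, the threshold shifting because one now needs $2p' < 3$. The final $L^3$ bound follows from Hardy--Littlewood--Sobolev applied to the Riesz potential $|v|^{-2}\ast\cdot$, which is bounded $L^{3/2}\to L^3$ in $\R^3$ since $\frac{1}{3/2}-\frac{1}{3}=\frac{1}{3}$.

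The main obstacle is the lower bound, since the upper bounds reduce to a routine interpolation between $L^1$ and $L^p$. The subtlety is implementing the cone-avoidance quantitatively so that $c_0$ depends only on $H$ (and not on higher moments of $f$), and organizing the geometry so that the correct $\langle v\rangle^{-3}$ scaling emerges. The cleanest route, and the one followed in the cited lemmas, is to fix the ball radius $R_0$ independently of $v$ rather than letting it grow with $|v|$: the energy bound then guarantees a definite amount of mass inside $B_{R_0}$, the entropy bound controls the mass swept out by the cylinder, and the $v$-dependence is concentrated in the $|v-v_*|^{-1}$ and angular factors, which together produce precisely the exponent $-3$.
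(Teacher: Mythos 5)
The paper cites this lemma from \cite{BedrossianGualdaniSnelson}, \cite{GoldingGualdaniZamponi}, and \cite{Silvestre} rather than proving it; your argument reproduces the standard proofs there. Both pieces check out: the lower bound via the quadratic form, a fixed ball $B_{R_0}$ carrying at least half the mass (from the energy normalization), entropy-based de la Vall\'ee--Poussin control of mass on a thin cylinder to rule out degeneration, with the $\langle v\rangle^{-3}$ scaling coming from $|v-v_*|^{-1}\gtrsim \langle v\rangle^{-1}$ and the angular factor $\gtrsim \delta^2\langle v\rangle^{-2}$; and the upper bounds via a split of the Riesz kernel at an optimally chosen radius $R$ (with the integrability thresholds $p>3/2$ and $p>3$ exactly where $|w|^{-1}$ and $|w|^{-2}$ cease to be in $L^{p'}_{loc}$), together with Hardy--Littlewood--Sobolev for the $L^{3/2}\to L^3$ bound. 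The exponent bookkeeping in the optimization, $R^{3/p'}\sim \|f\|_{L^1}/\|f\|_{L^p}$, indeed yields the stated powers.
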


We will frequently use that \eqref{eq:landau} propagates $L^1$-moments of any order. More precisely, $L^1$-moments of any order $s > 2$ grow at most linearly in time.
\begin{lemma}[\protect{From \cite[Lemma 2.1]{CarrapatosoDesvillettesHe}}]\label{lem:moments}
Let $k > 2$. Fix a non-negative initial datum $f_{in} \in L^1_k \cap L\log L$ satisfying the normalization \eqref{eq:normalization}. Suppose $f:\R^+ \times \R^3 \rightarrow \R^+$ is any weak solution of \eqref{eq:landau} with initial datum $f_{in}$. 
Then, for each $t \ge 0$,
\begin{equation}\label{eq:momentbound}
\int_{\R^3}f(t,v)\brak{v}^k\dd v \leq C(1+t).
\end{equation}
\end{lemma}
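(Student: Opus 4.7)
The plan is to derive a differential inequality for the moment $M_k(t) := \int f(t,v) \brak{v}^k \dd v$ that integrates to the claimed linear-in-time bound. I would test the divergence form \eqref{eq:landau} against the smooth weight $\phi(v) := \brak{v}^k$ and integrate by parts twice, using the identity $\nabla \cdot A[f] = \nabla a[f]$ valid for the Coulomb kernel (which one checks directly from $\partial_j (\Pi(z)_{ij}/|z|) = -2 z_i/|z|^3$) to obtain
\begin{equation*}
    \frac{\dd}{\dd t} M_k(t) = \int f(t,v) \big[A[f] : \nabla^2 \phi + 2 \nabla a[f] \cdot \nabla \phi\big] \dd v.
\end{equation*}
The identities $\mathrm{tr}(A[f]) = a[f]$ and $A[f]v \cdot v = \int |v\times v_*|^2/(8\pi|v-v_*|^3) f(v_*) \dd v_*$, together with $\nabla \phi = k v \brak{v}^{k-2}$ and $|\nabla^2 \phi| \lesssim \brak{v}^{k-2}$, reduce the integrand to explicit convolutions of $f$ against kernels that decay at infinity.

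Next, I would bound these convolutions pointwise using only the conserved mass, energy, and the entropy bound \eqref{eqn:Htheorem}. Splitting each convolution into a far-field piece ($|v_*| \le |v|/2$) and a near-field piece ($|v_*| > |v|/2$), the far field is handled directly using $|v-v_*| \gtrsim \brak{v}$ and the conserved $L^1_2$ norm, while the near-field singularity of $1/|v-v_*|$ is tamed by the $L\log L$ bound via a de la Vallée Poussin-type argument. Using in addition the anisotropic identity $|v\times v_*| \le |v||v - v_*|$ to handle the $A[f]v\cdot v$ contribution (which is where the projection $\Pi$ pays off), one obtains the pointwise coefficient bounds $a[f](v) + |v| |\nabla a[f](v)| \lesssim \brak{v}^{-1}$ and $A[f](v) v \cdot v \lesssim \brak{v}$, with implicit constants depending only on $M$, the conserved energy, and $H$. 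Inserting these into the moment identity yields the differential inequality
\begin{equation*}
    \frac{\dd}{\dd t} M_k(t) \lesssim \int f(t,v) \brak{v}^{k-3} \dd v.
\end{equation*}

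Finally, to close this inequality at linear-in-time growth, I would observe that for $k \in (2, 5]$ the right-hand side is already controlled by the conserved quantities $M_0$ and $M_2$ via Hölder interpolation, so $M_k'(t) \lesssim 1$ and the bound follows by direct time integration. For $k > 5$, I would proceed by induction on $\lfloor k \rfloor$, using interpolation between $M_2$ and $M_{k-1}$ and, where needed, an extra absorption step exploiting the entropy dissipation, roughly $D[f] \gtrsim \int |\nabla \sqrt{f}|^2 \brak{v}^{-3} \dd v$, together with a weighted Sobolev inequality to control the $k-3$-moment by strictly lower-order quantities. I expect the principal obstacle to be keeping all constants in the coefficient bounds dependent solely on $M$, $H$, and conserved moments, rather than on $M_k(t)$ itself; this autonomy of the ODE is exactly what the anisotropic structure of the Coulomb kernel (encoded in $\Pi$) permits, and it is ultimately what distinguishes the linear growth $M_k(t) \le C(1+t)$ from the generic exponential bound that would follow from a naive Gronwall argument.
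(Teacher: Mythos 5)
The paper does not prove this lemma; it is imported verbatim from \cite[Lemma 2.1]{CarrapatosoDesvillettesHe}, so there is no in-paper argument to compare against. Your blind reconstruction has a genuine gap at its central step, the pointwise coefficient bounds.

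You claim that the entropy bound yields $a[f](v) + |v|\,|\nabla a[f](v)| \lesssim \brak{v}^{-1}$ with constants depending only on mass, energy, and $H$. This is false. Take $f(v) = |v|^{-2}\chi_{\{|v|<r\}}$ for small $r$. Then $\int f = 4\pi r$, $\int f |\log f| = 8\pi r(\log(1/r) + 1)$, and $\int f\brak{v}^2 \lesssim r$, so $f \in L^1_2 \cap L\log L$ with all the relevant norms as small as you like; yet $a[f](0) = \frac{1}{4\pi}\int_{|v_*|<r}|v_*|^{-3}\,\dd v_* = +\infty$, and $\nabla a[f]$ is even worse. The de la Vall\'ee Poussin argument you allude to gives uniform integrability of $f$ (a modulus on $\sup_{|E|<\delta}\int_E f$), but this is not enough to bound the convolution with $1/|v-v_*|$ pointwise: a dyadic decomposition near the singularity requires $\int_E f \lesssim |E|^{1/3+\varepsilon}$, which $L\log L$ does not provide. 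Boundedness of $a[f]$ and $\nabla a[f]$ genuinely requires $f \in L^p$ for $p > 3/2$ and $p > 3$ respectively, as in Lemma \ref{lem:Alower}, and those hypotheses are not available here. Consequently the differential inequality $M_k' \lesssim \int f\brak{v}^{k-3}$ as you derive it is not justified.

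The actual proof in \cite{CarrapatosoDesvillettesHe} (and in the earlier literature on moment propagation for Landau) sidesteps the non-local coefficients entirely: one works with the symmetric weak form $\int Q(f,f)\varphi = -\frac{1}{2}\iint ff_*\, a(v-v_*)(\nabla\log f - \nabla\log f_*)\cdot(\nabla\varphi-\nabla\varphi_*)$, where $a(z) = \Pi(z)/(8\pi|z|)$. Taylor expansion plus the identity $\Pi(v-v_*)(v-v_*) = 0$ kill the worst two powers of the $|v-v_*|^{-1}$ singularity in $\Pi(v-v_*)(\nabla\varphi(v)-\nabla\varphi(v_*))$, and the remaining singularity is handled via Cauchy--Schwarz against the entropy dissipation $D(f)$, which is controlled in $L^1_t$ for $H$-solutions. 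This is structurally quite different from a pointwise-coefficient argument. As a secondary remark, even granting your differential inequality, your induction on $\lfloor k\rfloor$ via $M_k' \lesssim M_{k-3}$ would degrade the growth rate with $k$ (giving $M_k \lesssim (1+t)^{\lceil (k-2)/3\rceil}$), not the uniform linear bound claimed; producing $C(1+t)$ for all $k$ requires the extra cancellation structure of the symmetric form.
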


The following weighted Sobolev inequality is crucial for our method. This inequality is proved in the appendix of \cite{GoldingGualdaniLoher}, adapting the extensive literature on weighted Sobolev inequalities (see for instance \cite{SawyerWheeden}).
\begin{lemma}\label{lem:poincare}
Suppose $g: \R^3 \rightarrow \R$ is Schwartz class and let $1 \le s \le 6$. Then, there are universal constants $C_1(s)$ and $C_2(s)$ such that
\begin{equation*}
\left(\int_{\R^3} |g|^6 \brak{v}^{-9}\;dv\right)^{1/3} \le C_1\int_{\R^3} |\nabla g(v)|^2 \brak{v}^{-3} \dd v + C_2\left(\int_{\R^3} \abs{g}^s\dd v\right)^{2/s}.
\end{equation*}
\end{lemma}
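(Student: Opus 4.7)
The plan is to reduce the weighted inequality to the standard unweighted three-dimensional Sobolev embedding by conjugating out the weight. I would set $h := g\brak{v}^{-3/2}$, so that the left-hand side of the claim is exactly $\|h\|_{L^6(\R^3)}^2$; this exponent of weight is forced by matching $\|h\|_{L^6}^6$ with $\int |g|^6 \brak{v}^{-9}\dd v$. Applying the usual Sobolev inequality $\|h\|_{L^6}^2 \le C\|\nabla h\|_{L^2}^2$, expanding $\nabla h = \brak{v}^{-3/2}\nabla g - \tfrac{3}{2}\,v\,\brak{v}^{-5/2} g$, and using the elementary bound $|v|\brak{v}^{-5/2} \le \brak{v}^{-3/2}$ together with $(a+b)^2 \le 2a^2 + 2b^2$ leads to the intermediate estimate
\begin{equation*}
\left(\int_{\R^3} |g|^6 \brak{v}^{-9}\dd v\right)^{1/3} \le C\int_{\R^3}|\nabla g|^2 \brak{v}^{-3}\dd v + C\int_{\R^3}|g|^2 \brak{v}^{-3}\dd v.
\end{equation*}

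It remains to control the extra term $\int |g|^2 \brak{v}^{-3}\dd v$ by the right-hand side of the claim. I would split into two cases according to $s$. For $s \in (2,6]$, a direct H\"{o}lder inequality with conjugate exponents $s/2$ and $s/(s-2)$ gives $\int |g|^2 \brak{v}^{-3}\dd v \le C(s)\|g\|_{L^s}^2$, since $\int \brak{v}^{-3s/(s-2)}\dd v < \infty$, and the proof is complete. For $s \in [1,2]$, I would instead interpolate $h$ in $L^2$ between $L^s$ and $L^6$,
\begin{equation*}
\|h\|_{L^2} \le \|h\|_{L^s}^\theta\, \|h\|_{L^6}^{1-\theta}, \qquad \theta = \frac{2s}{6-s} \in (0,1],
\end{equation*}
noting $\|h\|_{L^s}^s = \int |g|^s \brak{v}^{-3s/2}\dd v \le \|g\|_{L^s}^s$ since $\brak{v}^{-3s/2}\le 1$. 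Squaring and applying Young's inequality then yields, for any $\eps > 0$,
\begin{equation*}
\int_{\R^3} |g|^2 \brak{v}^{-3}\dd v \le \eps \left(\int_{\R^3}|g|^6 \brak{v}^{-9}\dd v\right)^{1/3} + C_\eps \|g\|_{L^s}^2,
\end{equation*}
and choosing $\eps$ small enough to absorb the first term into the left-hand side of the intermediate estimate completes the proof. The endpoint $s = 6$ is trivial since $\brak{v}^{-9}\le 1$ already gives the bound with $C_1 = 0$.

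The only real obstacle is mostly bookkeeping: one must verify that the constants $C_1(s), C_2(s)$ stay finite uniformly across $[1,6]$ and that the H\"{o}lder regime $s > 2$ and the interpolation regime $s \le 2$ match at $s = 2$. Both the H\"{o}lder constant and the Young-type constant remain bounded as $s \to 2$ (at $s = 2$ one has $\theta = 1$, so the interpolation becomes an identity and no absorption is needed), so the matching causes no genuine difficulty. As a more conceptual alternative indicated in the paper, the entire lemma can be obtained in one step from a Sawyer--Wheeden type two-weight Poincar\'{e} inequality.
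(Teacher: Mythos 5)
The paper itself does not prove this lemma here: it cites the appendix of \cite{GoldingGualdaniLoher}, which adapts two-weight Sobolev machinery in the spirit of \cite{SawyerWheeden}, so there is no inline argument to compare against. Your proof is correct and self-contained, and likely more elementary than that route: conjugating by $\brak{v}^{-3/2}$ reduces the claim to the plain $\dot H^1 \hookrightarrow L^6$ Sobolev embedding plus a commutator term, which is then controlled by H\"older (for $s>2$) or by interpolation plus absorption (for $s\le 2$). Two small remarks. First, the gradient of the weight is $\nabla\brak{v}^{-3/2} = -\tfrac{3}{2}\,v\,\brak{v}^{-7/2}$, not $-\tfrac{3}{2}\,v\,\brak{v}^{-5/2}$; with the correct exponent the commutator term is in fact bounded by $\int |g|^2\brak{v}^{-5}\dd v$, which is stronger than what you wrote, so your stated intermediate estimate with $\brak{v}^{-3}$ remains valid, merely suboptimal, and the rest of the argument is unaffected. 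Second, the absorption step for $s\in[1,2)$ tacitly uses that the left-hand side is finite; for Schwartz $g$ this is automatic, but it is worth recording explicitly in case the lemma is later invoked via density for less regular $g$.
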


\section{Propagation of \texorpdfstring{$L^p$}{Lp} norms}\label{sec:ode}

In this section, we present an ODE argument that provides a quantitative bound on the time that a smooth solution to the homogeneous Landau equation remains in $L^p$, in terms of norms of the initial datum. This is a variant of the ODE argument used in \cite{GoldingGualdaniLoher} to quantify how long solutions with small initial data remain small. We note that a version of this a priori estimate specialized to the case $p = 2$ first appeared in \cite{AlexandreLiaoLin}.

\begin{proposition}\label{prop:ode}
Fix $p > \frac{3}{2}$, $m > \frac{9}{2}\frac{p-1}{p-3/2}$, and $M \in \R^+$. Let $f:[0,T] \times \R^+ \rightarrow \R^+$ be any smooth, rapidly decaying solution to \eqref{eq:landau} satisfying \eqref{eq:normalization} and having initial data $f_{in} \in L^p$ with
\begin{equation}
\int_{\R^3} \brak{v}^m f_{in} \dd v + \int_{\R^3} f_{in}\abs{\log(f_{in})} \dd v \le M.
\end{equation}
Then, there is a time $T_0 > 0$ depending only on $m$, $p$, $M$, and $\|f_{in}\|_{L^p}$ such that
\begin{equation}\label{eq:apriori}
\sup_{t\in (0,\min(T,T_0))} \|f(t)\|_{L^p}^p + \int_0^{\min(T,T_0)} \int_{\R^3}\brak{v}^{-3}\abs{\nabla f^{p/2}}^2 \dd v \dd t \le C,
\end{equation}
where $C$ depends only on $m$, $p$, $M$, and $\norm{f_{in}}_{L^p}$.
\end{proposition}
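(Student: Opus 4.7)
The plan is to derive a scalar differential inequality for $N(t) := \|f(t)\|_{L^p}^p$ whose right-hand side is a super-linear polynomial in $N$ with coefficients controlled by the propagated $L^1_m$ moment, and close it via ODE comparison on a short time interval. The main technical obstacle will be estimating the reaction term $\int f^{p+1}\,dv$ so that the weighted Dirichlet form $D(t) := \int |\nabla f^{p/2}|^2 \langle v\rangle^{-3}\,dv$ enters with a sub-unit power; this is what allows a small multiple of $D$ to be absorbed by Young's inequality, and the strict hypothesis $m > \frac{9(p-1)}{2p-3}$ is precisely what makes this possible.

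First, I would derive the energy identity by multiplying \eqref{eq:landau} in divergence form by $pf^{p-1}$, integrating by parts, and using $\Delta a[f] = -f$ on the drift term. Using $f^{p-2}|\nabla f|^2 = \tfrac{4}{p^2}|\nabla f^{p/2}|^2$, this yields
\[
    \frac{d}{dt} N + \frac{4(p-1)}{p}\int A[f]\,\nabla f^{p/2}\cdot \nabla f^{p/2}\,dv = (p-1)\int f^{p+1}\,dv.
\]
Applying the lower bound $A[f]\geq c_0 \langle v\rangle^{-3}$ from Lemma \ref{lem:Alower} (available thanks to the entropy bound on $f_{in}$) reduces this to $\frac{d}{dt} N + c_1 D \leq (p-1)\int f^{p+1}$ with $c_1 > 0$ depending on $p$ and $H$.

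Next I would bound the reaction term. Lemma \ref{lem:poincare} with $g = f^{p/2}$ and $s = 2$ gives $\bigl(\int f^{3p}\langle v\rangle^{-9}\bigr)^{1/3} \leq C_1 D + C_2 N$. I then interpolate $\int f^{p+1}$ by a three-way Hölder among the reservoirs $L^p$, $L^{3p}(\langle v\rangle^{-9})$, and $L^1_m$. Matching the total weight and total power of $f$ fixes the Hölder exponents uniquely, with the middle one forced to $q_2 = 2p - 9(p-1)/m$; the hypothesis $m > 9(p-1)/(2p-3)$ is equivalent to $q_2 > 3$. Combining with the Sobolev bound produces
\[
    \int f^{p+1}\,dv \leq C\, N^{1/q_1}\, M_m^{1/q_3}\, (D + N)^{3/q_2},
\]
where $M_m := \|f\|_{L^1_m}$ and crucially $3/q_2 < 1$. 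Using subadditivity of $x\mapsto x^{3/q_2}$ and Young's inequality with conjugate exponents $(q_2/3,\,q_2/(q_2-3))$ on the resulting $D^{3/q_2}$ factor produces a bound of the form $\epsilon D + C(\epsilon)(\text{polynomial in }N, M_m)$; a direct check shows both $N$-powers in the polynomial are strictly greater than $1$ (the condition reduces to $m > 9/2$, which holds automatically).

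Choosing $\epsilon$ small enough to be absorbed into $c_1 D$, and using Lemma \ref{lem:moments} to bound $M_m(t)\leq C(M,H)(1+t)$ on bounded time intervals, I obtain a Riccati-type inequality
\[
    \frac{d}{dt} N + \tfrac{c_1}{2} D \leq C(p, m, M, H)\,\bigl(N^{\alpha_1} + N^{\alpha_2}\bigr),
\]
with computable $\alpha_1, \alpha_2 > 1$. Standard ODE comparison gives $N(t)\leq 2N(0)$ on some interval $[0, T_0]$ with $T_0 = T_0(p, m, M, H, \|f_{in}\|_{L^p}) > 0$, and integrating the inequality on $[0, T_0]$ simultaneously yields the companion dissipation bound $\int_0^{T_0} D\,dt \leq C$, completing \eqref{eq:apriori}. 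The interpolation is the hard step: at the borderline $m = 9(p-1)/(2p-3)$ one is forced to $q_2 = 3$, so the $D$-power becomes exactly $1$ and Young's inequality has no room to work for arbitrary data; the strict inequality in the hypothesis is exactly what creates the room.
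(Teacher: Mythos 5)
Your proposal is correct and follows essentially the same route as the paper: energy estimate with the coercivity bound $A[f]\gtrsim \langle v\rangle^{-3}$, then a three-way interpolation of $\int f^{p+1}$ among $L^{3p}(\langle v\rangle^{-9})$, $L^p$, and $L^1_m$ combined with the weighted Sobolev inequality of Lemma~\ref{lem:poincare} (with $g=f^{p/2}$, $s=2$), then Young's inequality to absorb a sub-unit power of the Dirichlet form, and finally a Bernoulli/Riccati-type ODE comparison on a quantitative short time. Your Hölder exponent $q_2 = 2p - 9(p-1)/m$ is the same quantity as the paper's $(p+1)\theta_1/p = 3m/(2pm-9p+9)$ in disguise (indeed $3/q_2 = (p+1)\theta_1/p$), and the threshold $q_2 > 3$ is exactly $m > \tfrac{9}{2}\tfrac{p-1}{p-3/2}$ as you note; the only cosmetic divergence is that you bound the propagated moment $\|f(t)\|_{L^1_m}\le C(1+t)$ by a constant on a bounded interval and arrive at a polynomial in $N$ with two super-linear terms, whereas the paper keeps the $(1+t)^{\alpha_2}$ coefficient explicit through a Bernoulli substitution. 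Both close the argument in the same way.
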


\begin{proof}
Multiplying \eqref{eq:landau} with $f^{p-1}$ and integrating in $v$, we integrate by parts and rearrange terms to obtain:
\begin{equation*}
\frac{1}{p}\frac{\dd}{\dd t} \int_{\R^3} f^p \dd v + \frac{4(p-1)}{p^2}\int_{\R^3} A[f]\nabla f^{p/2} \cdot \nabla f^{p/2} \dd v = \frac{p-1}{p}\int_{\R^3} f^{p+1}\dd v.
\end{equation*}
Using the lower bound on the diffusion coefficient from Lemma \ref{lem:Alower}, we have
\begin{equation}\label{eq:landau-test}
    \begin{aligned}
            \frac{\dd}{\dd t}\int_{\R^3} f^p \dd v + c_0\int_{\R^3} \abs{\nabla f^{\frac{p}{2}}}^2 \langle v \rangle^{-3}\dd v\leq C(p)\int_{\R^3} f^{p+1} \dd v.
    \end{aligned}
\end{equation}
To bound the right hand side, we first interpolate between $L^{3p}_{-9}$, $L^p$, and $L^1_m$ to find
\beq\label{eq:interpolation_estimate_1}
    \norm{f}_{L^{p+1}} \leq\norm{\langle \cdot \rangle^{-\frac{3}{p}}f}_{L^{3p}}^{\theta_1} \norm{ f}_{L^p}^{\theta_2}\norm{\langle\cdot\rangle^m f}_{L^1}^{\theta_3},
\eeq
provided $\theta_1,\ \theta_2,\ \theta_3 \in (0,1)$ and the following relations are satisfied: 
\beqs
    \theta_1 + \theta_2 + \theta_3 = 1, \qquad \frac{\theta_1}{3p} + \frac{\theta_2}{p} + \theta_3 = \frac{1}{p+1}, \qquad \frac{-3}{p}\theta_1 + m \theta_3 = 0.
\eeqs
We solve this system of constraints in terms of $m$ and $p$ as:
\begin{equation}\label{eq:thetas}
    \theta_1 = \frac{3m p}{(p+1)(2pm - 9p +9)}, \qquad \theta_2 = \frac{2p^2 m - 9p^2 -m p}{(p+1)(2pm - 9p +9)},\qquad \theta_3 = \frac{9}{(p+1)(2pm - 9p +9)},
\end{equation}
where we note that $\theta_1,\,\theta_2,$ and $\theta_3$ belong to $(0,1)$ for $p > \frac{3}{2}$ and $m > \frac{9p}{2p-1}$.
Applying the weighted Sobolev inequality from Lemma \ref{lem:poincare} gives
\bals
    \norm{f}^{p+1}_{L^{p+1}} \leq \left(\int_{\R^3} \brak{v}^{-3}\abs{\nabla f^{\frac{p}{2}}}^2 \dd v + \int_{\R^3} f^p \dd v\right)^{\frac{(p+1)\theta_1}{p}} \norm{ f}_{L^p}^{(p+1)\theta_2}\norm{\langle\cdot\rangle^m f}_{L^1}^{(p+1)\theta_3},
\eals
where $\frac{(p+1)\theta_1}{p} < 1$, using the restriction that $m > \frac{9}{2}\frac{p-1}{p-3/2} > \frac{9p}{2p-1}$. Therefore, an application of Young's inequality yields
\bal\label{eq:landau-test1}
    \norm{f}^{p+1}_{L^{p+1}} \leq  \frac{c_0}{2}\left(\int_{\R^3} \brak{v}^{-3}\abs{\nabla f^{\frac{p}{2}}}^2 \dd v + \int_{\R^3} f^p \dd v\right) + C(c_0,m,p)\left(\norm{ f}_{L^p}^{\theta_2}\norm{\langle\cdot\rangle^m f}_{L^1}^{\theta_3}\right)^{\frac{p(p+1)}{p-(p+1)\theta_1}}.
\eal
Therefore, setting $y(t) = \|f(t)\|_{L^p}^p$, we arrive at the differential inequality
\begin{equation}\label{eqn:ODE1}
    \frac{dy}{dt} + \frac{c_0}{2}\int_{\R^3} \brak{v}^{-3}\abs{\nabla f^{p/2}}^2 \le C(p, c_0) y(t) + C(p,c_0,m)y^{\alpha_1}(t)\left(\int_{\R^3} f\brak{v}^m \dd v\right)^{\alpha_2},
\end{equation}
where $\alpha_1 = \frac{(p+1)\theta_2}{p - (p+1)\theta_1} > 1$ and $\alpha_2 = \frac{p(p+1)\theta_3}{p - (p+1)\theta_1}$.
Since $\|f(t)\|_{L^1_m}$ grows at most linearly in time by Lemma \ref{lem:moments}, this simplifies to 
\begin{equation*}
    \frac{dy}{dt} \le C(p, c_0) y + C(p,c_0,m)y^{\alpha_1}(1+t)^{\alpha_2} =: C_1y + g(t)y^{\alpha_1}.
\end{equation*}
In the case of equality, this is a Bernoulli type ODE, motivating the substitution $v = \frac{y^{1-\alpha_1}}{1-{\alpha_1}}$, which gives
\begin{equation*}
    \frac{\dd v}{\dd t} \le C_1(1-\alpha_1)v + g(t).
\end{equation*}
Performing a second substitution of $w = e^{C_1(\alpha_1 -1)t}v$ yields
\begin{equation*}
    \frac{\dd w}{\dd t} \leq e^{C_1(\alpha_1-1)t} g(t).
\end{equation*}
Thus, integrating in time, 
\begin{equation}\label{eq:diff-ineq}
    e^{C_1(\alpha_1-1)t}v(t) = w(t) \le  w(0) + \int_0^t e^{C_1(\alpha_1-1)\tau} g(\tau) \dd\tau = v(0) + \int_0^t e^{C_1(\alpha_1-1)\tau} g(\tau) \dd\tau.
\end{equation}
We conclude that for $0 < t < T$, $y$ satisfies:
\begin{equation*}
    y(t) \le e^{C_1t}\left[y_0^{1-\alpha_1} + (1-\alpha_1)\int_0^t e^{-C_1(1-\alpha_1)\tau} g(\tau)\dd \tau\right]^{1/(1-\alpha_1)}.
\end{equation*}
Choosing $2T_0$ sufficiently small, depending only on $M$, $m$, and $p$, we can enforce 
\begin{equation}\sup_{0 < t < \min(T,2T_0)} y(t) \le Cy_0\end{equation} for some constant $C > 0$ depending only on $m$, $p$, and $M$. Plugging this bound for $y(t)$ back into the right hand side of \eqref{eqn:ODE1}, we find the desired bound until the time $T_0$.

\end{proof}

\section{Quantitative Short Time Smoothing}\label{sec:degiorgi}

In this section, we present a quantitative regularization estimate for the $L^\infty$ norm of smooth solutions to \eqref{eq:landau}. More precisely, we show that for smooth solutions to \eqref{eq:landau}, the $L^\infty$ norm is controlled by the $L^{p+1}_{t,v}$ norm for $p > 3/2$. For solutions of \eqref{eq:landau}, this norm is controlled by the energy functional appearing in \eqref{eq:apriori} using interpolation and Lemma \ref{lem:poincare}. The proof relies on a De Giorgi type iteration, introduced in \cite{DeGiorgi} to study the regularity of weak solutions to elliptic equations. The specific version here is adapted from \cite{GoldingGualdaniLoher}, which followed \cite{AlonsoBaglandsDesvillettesLods} and used the energy functional $\mathcal{E}$ defined below in \eqref{eq:energy} as a control quantity. Here we refine the method and use certain space-time Lebesgue norms as control quantities. 

\begin{proposition}\label{prop:degiorgi}
Fix $p > \frac{3}{2}$, $m > \frac{9}{2}\frac{p-1}{p - \frac{3}{2}}$, $M \in \R^+$. Suppose $f_{in}$ is a non-negative Schwartz class function, satisfying the normalization \eqref{eq:normalization} and the bounds
\begin{equation*}
    \norm{f_{in}}_{L^1_m} \le M \qquad \text{and} \qquad \int_{\R^3} f_{in} \abs{\log(f_{in})} \;dv \le M.
\end{equation*}
Let $f:[0,T^*)\times \R^3\rightarrow \R^+$ be the unique Schwartz class solution to \eqref{eq:landau} with initial datum $f_{in}$ and $0 < T^* \le \infty$ its maximal time of existence. Then, for $\gamma$ and $\beta$, defined in terms of $m$ and $p$ via
\begin{equation}\label{eq:exponents}
\beta = \frac{2}{3} - \frac{3}{m} \qquad \text{and} \qquad \gamma = \frac{2(p - 3/2)}{3m}\left[m - \frac{9(p-1)}{2(p-3/2)}\right],
\end{equation}
there is a constant $C = C(M, m, p)$ such that for any $0 < t < \min(1,T^*)$, $f$ satisfies
\begin{equation*}
\norm{f(t)}_{L^\infty(\R^3)}  \le C\left(A_0^{\frac{\beta}{\gamma}} +A_0^{\frac{\beta}{1 + \gamma + \beta}}t^{-\frac{1+\beta}{1 + \gamma +\beta}}\right), \qquad \text{where} \qquad A_0 :=  \int_{0}^{t}\int_{\R^3} f^{p+1}\dd v \dd t.
\end{equation*}
\end{proposition}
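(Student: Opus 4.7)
The strategy is a De Giorgi level-set iteration adapted to the nonlinear diffusion of \eqref{eq:landau}. Fix a threshold $K > 0$ (to be optimized at the end) and define the sequences
\[
t_k := \tfrac{t}{2}\bigl(2 - 2^{-k}\bigr) \nearrow t, \qquad K_k := K\bigl(1 - 2^{-k}\bigr) \nearrow K, \qquad f_k := (f - K_k)_{+},
\]
together with the energy
\[
U_k := \sup_{s \in [t_k, t]} \int_{\R^{3}} f_k(s)^{p}\,\dd v \;+\; \int_{t_k}^{t}\!\!\int_{\R^{3}} \langle v\rangle^{-3}\,\abs{\nabla f_k^{p/2}}^{2}\,\dd v\,\dd s.
\]
The plan is to establish a nonlinear recurrence $U_{k+1} \le C_{0}^{k}\,K^{-a}\,U_{k}^{1+\delta}$ for explicit $\delta, a > 0$ depending only on $m$ and $p$, then choose $K$ large enough in terms of $U_0$ to force $U_k \to 0$; this yields $f(t) \le K$ pointwise and, optimizing $K$, the desired bound.

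To set up the recurrence, I test \eqref{eq:landau} with $f_k^{p-1}$ and integrate by parts; together with the lower bound $A[f] \gtrsim \langle v\rangle^{-3}$ from Lemma~\ref{lem:Alower}, this produces
\[
\frac{d}{dt}\int f_k^{p}\,\dd v \;+\; c\int \langle v\rangle^{-3}\abs{\nabla f_k^{p/2}}^{2}\,\dd v \;\le\; C(p)\int f^{2} f_{k}^{p-1}\,\dd v.
\]
The decomposition $f = f_k + K_k$ on $\{f > K_k\}$ gives $\int f^{2}f_k^{p-1} \lesssim \int f_k^{p+1} + K_{k}^{2}\int f_k^{p-1}$, and averaging between two times in $[t_{k-1}, t_k]$ (to pass from $U_{k-1}$ to $U_k$ without losing the initial data at level $t_k$) absorbs a boundary term at the cost of adding $(t_k - t_{k-1})^{-1}\int_{t_{k-1}}^{t}\!\int f_k^{p}\,\dd v\,\dd s \lesssim 2^{k}t^{-1}U_{k-1}$ to the right-hand side. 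What remains is to control $\int_{t_k}^{t}\!\!\int f_k^{p+1}$ by $U_k$ and $U_{k-1}$ with a suitable nonlinear gain.

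This control is the heart of the proof, coming from a weighted Sobolev--moment interpolation. Applying Lemma~\ref{lem:poincare} to $g = f_k^{p/2}$ bounds $\int f_k^{3p}\langle v\rangle^{-9}\,\dd v$ by a power of $U_k$, and interpolating between $L^{3p}_{-9}$ and $L^1_m$ (controlled by Lemma~\ref{lem:moments}) yields an estimate of the form $\int_{t_k}^{t}\!\!\int f_k^{p+1+\gamma}\,\dd v\,\dd s \lesssim U_k^{1+\beta}$ with $\beta$ and $\gamma$ as in \eqref{eq:exponents}; the restriction $m > \frac{9}{2}\frac{p-1}{p-3/2}$ is precisely what forces $\gamma > 0$. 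The gain is then activated by Chebyshev's inequality on the level set $\{f > K_{k}\} \subset \{f_{k-1} > 2^{-k}K\}$, of measure at most $(2^{-k}K)^{-p}U_{k-1}$, combined with Hölder's inequality interpolating $f_k^{p+1}$ between $f_k^{p+1+\gamma}$ and the indicator of the level set; this yields the desired recurrence $U_{k+1} \le C_{0}^{k}K^{-a}U_{k}^{1+\delta}$. The standard nonlinear iteration lemma then gives $U_k \to 0$ as soon as $K$ exceeds a threshold determined by $U_0$, and the two terms in the conclusion correspond to the two contributions to $U_0$: the space-time integral $A_0$ controls the supremum part producing the $A_0^{\beta/\gamma}$ summand, while the time-averaging boundary term of order $t^{-1}A_0$ produces the $A_0^{\beta/(1+\gamma+\beta)} t^{-(1+\beta)/(1+\gamma+\beta)}$ summand. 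The main obstacle is the delicate exponent bookkeeping throughout the iterated interpolation: balancing the Sobolev weight $\langle v\rangle^{-9}$ against the moment weight $\langle v\rangle^{m}$, tracking the Chebyshev gain $(2^{-k}K)^{-p}$, and finally optimizing $K$ to recover the exact formulas in \eqref{eq:exponents}.
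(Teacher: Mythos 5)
Your overall strategy — level-set iteration driven by the lower bound on $A[f]$, the weighted Sobolev inequality of Lemma~\ref{lem:poincare}, moment control via Lemma~\ref{lem:moments}, and a discrete barrier — is the right one, and the exponents $\beta$ and $\gamma$ emerge from exactly the interpolation you describe. However, there is a structural gap in the recurrence that would prevent the iteration from closing with the exponents claimed in the statement.

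The issue is the choice of iterated quantity. You iterate the energy $U_k = \mathcal{E}_{K_k}(t_k,t)$, and in your recurrence the time-averaging boundary term is estimated \emph{linearly}: you write $(t_k - t_{k-1})^{-1}\int\!\int f_k^p \lesssim 2^k t^{-1} U_{k-1}$. But a De Giorgi recurrence with a linear term $2^k t^{-1}U_{k-1}$ alongside a superlinear term $C_0^k K^{-a}U_k^{1+\delta}$ does not drive $U_k \to 0$: once $U_k$ is small, the linear term dominates, and the geometric factor $2^k$ forces growth. To make the boundary term superlinear as well you must apply the level-set gain $f_\ell^+ \le (\ell-k)^{-\alpha}(f_k^+)^{1+\alpha}$ (or equivalently Chebyshev) to it, converting $\int\!\int f_k^p$ into a lower-level $\int\!\int f_{k-1}^{p+1}$, which is then controlled by the weighted Sobolev step. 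This is a second reason to iterate the space-time quantity $\mathcal{A}_\ell(t_n,T) := \int_{t_n}^T\!\int (f_\ell^+)^{p+1}$ rather than the energy: with $\mathcal{A}$ as the iterate, every term of the energy estimate — including the time-averaging term — lands back on $\mathcal{A}$ at a lower level, and composing with the Sobolev step gives a clean two-step recurrence of the form $A_{n+2} \le C 2^{n(\cdots)} K^{-\gamma}\bigl[\tfrac{1}{tK}+1\bigr]^{1+\beta} A_n^{1+\beta}$, where the $t^{-1}$ factor receives the superlinear power $(1+\beta)$.

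This is also essential for the exponents. The conclusion is stated in terms of $A_0 = \int_0^t\!\int f^{p+1}$, which is precisely $\mathcal{A}_0(0,t)$ and hence the natural starting datum if one iterates $\mathcal{A}$. Your $U_0 = \mathcal{E}_0(t/2,t)$ is a different quantity, and converting $U_0 \lesssim t^{-1}A_0$ and then plugging into $K \sim U_0^{\beta/\gamma}$ would produce $t^{-\beta/\gamma}$ instead of the correct $t^{-(1+\beta)/(1+\gamma+\beta)}$, which does not match \eqref{eq:exponents}. The correct exponent comes from balancing $K^{1+\gamma+\beta} \gtrsim A_0^\beta t^{-(1+\beta)}$ inside the recurrence, and this balance is only visible when the $t^{-1}$ factor sits inside the recurrence bracket raised to $(1+\beta)$ rather than being pushed to the initial datum. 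So the fix is not cosmetic: switch the iterated functional from $\mathcal{E}$ to $\mathcal{A}$, run a two-step recurrence, and keep the $t^{-1}$ factor inside the recurrence until the end. The rest of your plan — the interpolation triple giving $\int\!\int f_k^{p+1+\gamma}\lesssim \mathcal{E}_k^{1+\beta}$, the constraint $m>\tfrac92\tfrac{p-1}{p-3/2}$ ensuring $\gamma>0$, and the final optimization of $K$ — is sound and matches the intended argument.
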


\begin{flushleft}
{\bf \underline{Step 1: Introduction of Level Set Functions and Functionals}}
\end{flushleft}

For any fixed $\ell \in \R^+$, we consider the level set function, denoted by 
\bals
	f_\ell(t, v) := f(t, v) - \ell, \quad f_\ell^+(t, v) = \max\left(f_\ell(t, v), 0\right).
\eals
For any $\ell \ge 0$ and $0 \le T_1 \le T_2 \le \min(1,T^*)$, we define two functionals of $f_\ell^+$:
\begin{equation}\label{eq:energy}
\begin{aligned}
    \mathcal{E}_\ell(T_1,T_2) &= \sup_{T_1 < t < T_2} \int_{\R^3} (f_\ell^+)^p \dd v + \int_{T_1}^{T_2}\int_{\R^3} \brak{v}^{-3}\abs{\nabla (f_\ell^+)^{p/2}}^2\dd v\dd t,\\[7pt]
    \mathcal{A}_\ell(T_1,T_2) &= \int_{T_1}^{T_2} \int_{\R^3} (f_\ell^+)^{p+1} \dd v\dd t.
\end{aligned}
\end{equation}
The functional $\mathcal{E}_\ell$ is an energy functional associated to \eqref{eq:landau}, which appears when one attempts standard $L^p$ energy estimates. Note that we unconditionally control $\mathcal{E}(0,T_0)$ from the ODE argument in Proposition \ref{prop:ode}. The functional $\mathcal{A}_\ell$ is the ``control quantity'' appearing in the statement of Proposition \ref{prop:degiorgi}, which will be the iterated functional below.
Our goal now is to show the following two inequalities relating the two functionals: for any $0 \le T_1 < T_2 \le T_3 \le \min(1,T^*)$ and any $0 < k < \ell$,
\begin{align}
\mathcal{A}_{\ell}(T_1,T_3) &\le \frac{C(m,p,M)}{(\ell - k)^{\gamma}} \mathcal{E}_{k}(T_1, T_3)^{1+\beta},\label{eq:energy1}\\
\mathcal{E}_{\ell}(T_2,T_3) &\le C(m,p,M)\left[\frac{1}{(T_2 - T_1)(\ell - k)} + 1 + \frac{\ell}{(\ell - k)} + \frac{\ell^2}{(\ell - k)^2}\right] \mathcal{A}_{k}(T_1, T_3)\label{eq:energy2},
\end{align}
where $\beta$ and $\gamma$ are as in the statement of Proposition \ref{prop:degiorgi}.

\begin{flushleft}
{\bf \underline{Step 2: Proof of \eqref{eq:energy1}}}
\end{flushleft}

Let us show \eqref{eq:energy1}. This step does {\bf NOT} use that $f$ satisfies the Landau equation, but rather relies on functional inequalities. We start by noting that the weighted Sobolev inequality in Lemma \ref{lem:poincare} gives for any function $g:\R^3 \rightarrow \R$
\beqs
    \norm{g}_{L^r} \leq\norm{\langle \cdot \rangle^{-\frac{3}{p}}g}_{L^{3p}}^{\theta_1} \norm{ g}_{L^p}^{\theta_2}\norm{\langle\cdot\rangle^\alpha g}_{L^1}^{\theta_3},
\eeqs
provided $0 < \theta_1,\ \theta_2,\ \theta_3 < 1$ and the following relations are satisfied: 
\beqs
    \theta_1 + \theta_2 + \theta_3 = 1, \qquad \frac{\theta_1}{3p} + \frac{\theta_2}{p} + \theta_3 = \frac{1}{r}, \qquad \frac{-3}{p}\theta_1 + \alpha \theta_3 = 0.
\eeqs
Imposing further that $r\theta_1 = p$, we find
\bals
    r\theta_2 = \frac{p\left[r - p -\frac{2}{3}\right]}{p-1},\qquad r\theta_3 = \frac{5p-3r}{3(p-1)},\qquad \alpha = \frac{9(p-1)}{5p-3r}, \qquad r \in \left(p+\frac{2}{3}, \frac{5p}{3}\right).
\eals
The weighted Sobolev inequality of Lemma  \ref{lem:poincare} with $s = 2$ then implies for a constant depending only on $p$, $r$, and $\alpha$:
\beq \label{eqn:interpolation_estimate}
    \norm{g}^r_{L^r} \leq C\left(\norm{\langle \cdot \rangle^{-\frac{3}{2}}\nabla \left(g^{\frac{p}{2}}\right)}_{L^{2}}^{2} +\norm{g}_{L^p}^p\right) \norm{g}_{L^p}^{p\left(\frac{r - p -\frac{2}{3}}{p-1}\right)}\norm{\langle\cdot\rangle^\alpha g}_{L^1}^{\frac{5p-3r}{3(p-1)}}.
\eeq
Note that the admissible range for $r$, i.e. $(p+ 2/3, 5p/3)$ is non-degenerate if and only if $p > 1$ and, moreover, $p+1$ is within this range if and only if $p > 3/2$.
With the estimate \eqref{eqn:interpolation_estimate} at hand for general $g$ and $r$, let us return to our specific setting:
Let us fix $r$ by setting $\alpha = m$, or, in other words, define $r$ via the relation
\begin{equation*}
m = \frac{9(p-1)}{5p - 3r}  \qquad \text{or equivalently,} \qquad r = \frac{5p}{3} - \frac{3(p-1)}{m}.
\end{equation*}
With this definition of $r$, we see that $r$ is in the admissible range $(p+2/3, 5p/3)$ provided $m > 9/2$. However, since we need to bound the $L^{p+1}$ norm, we see that $r > p + 1$ if and only if
\begin{equation*}
m > \frac{9}{2} \frac{p- 1}{p - \frac{3}{2}}.
\end{equation*}
Since by assumption we have enough moments, we set $\gamma = r - (p + 1) > 0$. Therefore, applying \eqref{eqn:interpolation_estimate} to $f_k^+$, we find the pointwise-in-time estimate
\begin{equation*}
    \norm{f_k^+(t)}_{L^{p+1+\gamma}}^{p + 1 + \gamma} \le C\left(\int_{\R^3}\brak{v}^{-3}\abs{\nabla(f_k^+)^{\frac{p}{2}}}^2 \dd v +\norm{f_k^+}_{L^p}^p\right) \norm{f_k^+}_{L^p}^{p\left(\frac{\gamma + 1/3}{p-1}\right)}\norm{\langle\cdot\rangle^\alpha f_k^+}_{L^1}^{\frac{5p-3r}{3(p-1)}}.
\end{equation*}
A simple exponent computation shows $(r-p - 2/3)/(p-1) = 2/3 - 3/m = \beta$. So, integrating over $T_1 < t < T_3 \le \min(T^*, 1)$ and using Lemma \ref{lem:moments} to bound the $L^1$ moments of $f$, we obtain  
\begin{equation}\label{eq:energy1_proof}
\begin{aligned}
    &\norm{f_k^+}_{L^{p+1+\gamma}([T_1,T_3]\times \R^3)}^{p + 1 + \gamma} \\
    &\qquad\le C(M)\left(\int_{T_1}^{T_3}\int_{\R^3}\brak{v}^{-3}\nabla(f_k^+)^{\frac{p}{2}} \dd v\dd t\right)\norm{f_k^+}_{L^\infty(T_1,T_3;L^p)}^{p\beta} + C(M)\norm{f_k^+}_{L^\infty(T_1,T_3;L^p)}^{p(1 + \beta)}\\
        &\qquad\le C(M)\mathcal{E}_k(T_1,T_3)^{1 + \beta}.
\end{aligned}
\end{equation}

Finally, we pass from the lower level set $k$ to a higher lever set $\ell$: for any $0 \leq k < \ell$,
\beqs
	0 \leq f_\ell^+ \leq f_k^+,
\eeqs
and if $f_\ell > 0$, then $f \geq \ell > k$. In this case, $f_k = f_\ell + \ell - k$ and $\frac{f_k}{\ell-k} = \frac{f_\ell}{\ell-k} + 1 \geq 1$. Therefore,
\beqs
	 \mathbbm{1}_{\{f_\ell \geq 0\}} \leq \frac{f_k^+}{\ell-k}.
\eeqs
In particular, for any $\alpha > 0$,
\beqs
	 \mathbbm{1}_{\{f_\ell \geq 0\}} \leq \left(\frac{f_k^+}{\ell-k}\right)^\alpha.
\eeqs
Consequently, we deduce
\beq\label{eq:flfk}
	f_\ell^+ \leq (\ell-k)^{-\alpha} (f_k^+)^{1+\alpha}, \quad \text{for any }\alpha \geq 0.
\eeq
Combining \eqref{eq:energy1_proof} with \eqref{eq:flfk} where $\alpha = \gamma$, we obtain
\begin{equation*}
    \mathcal{A}_\ell(T_1,T_3) \le \frac{1}{(\ell - k)^\gamma}\int_{T_1}^{T_3}\int_{\R^3} (f_k^+)^{p+\gamma+1} \dd v\dd t \le \frac{C(M)}{(\ell - k)^{\gamma}}\mathcal{E}_k(T_1,T_3)^{1+\beta},
\end{equation*}
which completes the proof of \eqref{eq:energy1}.

\begin{flushleft}
{\bf \underline{Step 3: Proof of \eqref{eq:energy2}}}
\end{flushleft}

Let us now show \eqref{eq:energy2}. Contrary to the proof of \eqref{eq:energy1}, the proof of \eqref{eq:energy2} relies essentially on $f$ solving the Landau equation. We begin by testing the weak formulation of \eqref{eq:landau} with $(f_\ell^+)^{p-1}$. Since $\partial_t f_\ell^+ = \partial_t f \chi_{\{f \geq \ell\}}$ and $\nabla_v f_\ell^+ = \nabla_v f \chi_{\{f \geq \ell\}}$, we rearrange terms as in the proof of Proposition \ref{prop:ode} to obtain
\bals
	\frac{1}{p}\frac{\dd}{\dd t}\int_{\R^3} (f_\ell^+)^p \dd v + &\frac{4(p-1)}{p^2}\int_{\R^3} A[f]\nabla (f_\ell^+)^\frac{p}{2} \cdot \nabla (f_\ell^+)^\frac{p}{2} \dd v\\
	&= \frac{(p-1)}{p}\int_{\R^3} (f_\ell^+)^{p+1} \dd v +\frac{(2p-1)\ell}{p} \int_{\R^3} (f_\ell^+)^{p}\dd v + \ell^2\int_{\R^3}  (f_\ell^+)^{p-1}\dd v.
\eals
Using the lower bound on the coefficient $A[f]$ from Lemma \ref{lem:Alower} and discarding constants that depend only on $p$ and $M$ yields
\begin{equation}\label{eq:enestim}
\begin{aligned}
	\frac{\dd}{\dd t}\int_{\R^3} (f_\ell^+)^p\dd v + \int_{\R^3} \brak{v}^{-3}&\abs{\nabla (f_\ell^+)^{\frac{p}{2}}}^2\dd v\\
	&\lesssim_{p,M} \int_{\R^3} (f_\ell^+)^{p+1}\dd v + \ell\int_{\R^3} (f_\ell^+)^{p}\dd v + \ell^2\int_{\R^3}  (f_\ell^+)^{p-1}\dd v.
\end{aligned}
\end{equation}
Then, integrating \eqref{eq:enestim} over $(t_1, t_2)$ yields
\begin{equation*}
\begin{aligned}
	\int_{\R^3} (f_\ell^+(t_2))^p\dd v + \int_{t_1}^{t_2}\int_{\R^3} \brak{v}^{-3}\abs{\nabla (f_\ell^+)^{\frac{p}{2}}}^2\dd v &\lesssim_{p,M} \int_{\R^3} (f_\ell^+)^p(t_1) \dd v + \int_{t_1}^{t_2}\int_{\R^3} (f_\ell^+)^{p+1}\dd v \dd t \\
    &\qquad+ \ell\int_{t_1}^{t_2}\int_{\R^3} (f_\ell^+)^{p}\dd v \dd t + \ell^2\int_{t_1}^{t_2}\int_{\R^3}  (f_\ell^+)^{p-1}\dd v \dd t.
\end{aligned}
\end{equation*}
Finally, taking the the supremum over $t_2 \in [T_2, T_3]$, averaging over $t_1 \in [T_1, T_2]$, and using \eqref{eq:flfk} with $\alpha \in \set{0, 1, 2}$, we find
\bal\label{eq:enestim_int}
    \mathcal{E}_\ell(T_2,T_3) &= \sup_{s\in (T_2, T_3)} \int_{\R^3} (f_\ell^+(s))^p \dd v + \int_{T_2}^{T_3}\int_{\R^3} \brak{v}^{-3}\abs{\nabla (f_\ell^+)^{\frac{p}{2}}}^2\dd v\\
    &\lesssim_{M,p} \frac{1}{T_2 - T_1}\int_{T_1}^{T_2}\int_{\R^3} (f_\ell^+)^p(t) \dd v \dd t\\
    &\qquad+ \int_{T_1}^{T_3}\int_{\R^3} (f_\ell^+)^{p+1}\dd v \dd t + \ell\int_{T_1}^{T_3}\int_{\R^3} (f_\ell^+)^{p}\dd v \dd t + \ell^2\int_{T_1}^{T_3}\int_{\R^3}  (f_\ell^+)^{p-1}\dd v \dd t\\
    &\lesssim_{M,p} \left[\frac{1}{(T_2 - T_1)(\ell - k)} + 1 + \frac{\ell}{(\ell - k)} + \frac{\ell^2}{(\ell - k)^2}\right] \mathcal{A}_k(T_1, T_3),
\eal
which completes the proof of \eqref{eq:energy2}.

\begin{flushleft}
    {\bf \underline{Step 4: De Giorgi iteration}}
\end{flushleft}

We now finish the proof of Proposition \ref{prop:degiorgi} using an iteration procedure inspired by that of De Giorgi, introduced in \cite{DeGiorgi}. In particular, we will interweave the inequalities \eqref{eq:energy1} and \eqref{eq:energy2} passing to smaller dyadic space-time domains and higher level sets. To this end, we define the following iteration quantities: Let $0 < t < T = \min(T^*,1)$ be the fixed time in the statement of Proposition \ref{prop:degiorgi} and consider for $n \in \N$,
\beq\label{eq:levels}
	\ell_n = K\left(1 - 2^{-n}\right), \quad t_n = t\left(1 - 2^{-n}\right), \quad \text{and} \quad A_n = \mathcal{A}_{\ell_n}(t_n, T),
\eeq
where $K > 0$ is a parameter to be chosen appropriately. Indeed, the main remaining task is to find a value of $K$ for which $\lim_{n\rightarrow \infty} A_n = 0$. By the definition of the control quantity and the choice of iteration parameters, this would imply $f(\tau,v) \le K$ for almost every $t < \tau < T$ and $v\in\R^3$.

To this end, we first use \eqref{eq:energy1} to obtain
\begin{equation*}
    A_{n+2} \le \frac{C}{(\ell_{n+2} - \ell_{n+1})^{\gamma}}\mathcal{E}_{\ell_{n+1}}(t_{n+2},T)^{1 + \beta}.
\end{equation*}
Next, we use \eqref{eq:energy2} to obtain the recurrence
\begin{equation}\label{eq:recurrence}
\begin{aligned}
    A_{n+2} &\le \frac{C}{(\ell_{n+2} - \ell_{n+1})^{\gamma}}\left[\frac{1}{(\ell_{n+1} - \ell_n)(t_{n+2} - t_n)} + 1 + \frac{\ell_{n+1}}{(\ell_{n+1} - \ell_n)} + \frac{\ell_{n+1}^2}{(\ell_{n+1} - \ell_n)^2}\right]^{1+\beta} A_n^{1 + \beta}\\
    &\le \frac{C_12^{n(1 + \gamma + \beta)}A_n^{1+\beta}}{K^\gamma}\left[\frac{1}{Kt} + 1\right]^{1+\beta},
\end{aligned}
\end{equation}
where $C_1$ is now a fixed constant.
Because $\beta$ is positive, $A_n$ is supergeometric in $n$ and $A_n$ should decay fast enough to beat the geometrically-growing (in $n$) coefficients in the recurrence. To prove this, we employ a discrete barrier argument to find a quantitative estimate for $K$. We expect $A_n$ to decay exponentially, so we search for a choice of parameters $K$ and $Q > 0$ such that the sequence $A_n^*$ defined as
\beqs
    A_n^* := A_0 Q^{-n}, \quad \text{for } n \in \N
\eeqs
satisfies \eqref{eq:recurrence} with the reversed inequality, and so by induction remains larger than $A_n$. More precisely, we want to find a choice of $K$ and $A$ so that $A_n^*$ satisfies
\bal
    A_{n+2}^* \geq \frac{C_12^{n(\gamma + 1 + \beta)}(A_n^*)^{1+\beta}}{K^\gamma}\left[\frac{1}{tK}+ 1\right]^{1 + \beta}.
\label{eq:reverse}
\eal
Inserting our ansatz for $A_n^*$, \eqref{eq:reverse} holds if
\bals
    1 \geq \frac{C_1A_0^{\beta}Q^2 \left(2^{\gamma + 1 + \beta} Q^{-\beta}\right)^n}{K^\gamma}\left[\frac{1}{tK} + 1\right]^{1 + \beta}.
\eals
We now choose $Q$ such that 
\beqs
    2^{\gamma + 1 + \beta}Q^{-\beta} \leq 1 \qquad \text{or equivalently,}\qquad Q \geq 2^{\frac{\gamma + 1 + \beta}{\beta}}.
\eeqs
Then, for \eqref{eq:reverse} to hold we need
\bals
    1 \geq \left[\frac{C_1A_0^{\beta}Q^2}{K^\gamma}\right]^{\frac{1}{1+\beta}}\left[\frac{1}{tK} + 1\right].
\eals
Next, choosing $K$ so each of the terms is smaller than a half, the recurrence \eqref{eq:reverse} holds. More precisely, we choose $K$ as
\begin{equation} \label{eq:max_K}
\begin{aligned}
    K =  \max\left(\left(2^{1+\beta}C_1Q^2\right)^{\frac{1}{\gamma}}, \left(2^{1+\beta} C_1Q^2\right)^{\frac{1}{1+\beta + \gamma}} \right) \max \left\{A_0^{\frac{\beta}{\gamma}};  A_0^{\frac{\beta}{\gamma + 1 + \beta}}t^{-\frac{1 + \beta}{1 + \gamma + \beta}}\right\}.
\end{aligned}
\end{equation}
With the above choices of $K$ and $Q$, if $A_n \le A_n^*$, by construction
\begin{equation*}
    A_{n+2} \le \frac{C_12^{n(\gamma + 1 + \beta)}A_n^{1+\beta}}{K^\gamma}\left[\frac{1}{tK}+ 1\right]^{1 + \beta} \le \frac{C_12^{n(\gamma + 1 + \beta)}(A_n^*)^{1+\beta}}{K^\gamma}\left[\frac{1}{tK}+ 1\right]^{1 + \beta} \le A_{n+2}^*.
\end{equation*}
Since $A_0 = A_0^*$, it follows by induction that $A_n \leq A_n^*$ for $n$ even. Moreover, since $Q > 1$ we deduce
\beqs
    \lim_{n \to \infty} A_{2n} \le \lim_{n \to \infty} A_{2n}^* = 0.  
\eeqs
Recalling the definition of $A_n$ in \eqref{eq:levels}, the monotone convergence theorem implies 
\beqs
	\norm{f_K^+}_{L^{p+1}}^{p+1} = 0, \qquad \text{or equivalently,} \qquad \|f\|_{L^\infty((t,T)\times \R^3)} \le K.
\eeqs
Finally, from the choice of $K$, we have for all $v\in \mathbb{R}^3$ and $\tau \in(t,T)$
\begin{equation*}
    f(\tau,v) \le K \le C\left(A_0^{\frac{\beta}{\gamma}} +A_0^{\frac{\beta}{1 + \gamma + \beta}}t^{-\frac{1+\beta}{1 + \gamma +\beta}}\right).
\end{equation*}
Recalling the definitions of the exponents $\beta$ and $\gamma$ and performing some basic algebraic manipulations gives the simplified expressions found in \eqref{eq:exponents}, completing the proof of Proposition \ref{prop:degiorgi}.

\section{Short time existence}\label{sec:proof}
\subsection{Proof of Theorem \ref{thm:main}}

\begin{flushleft}
    \underline{{\bf Step 1: Approximation procedure}}
\end{flushleft}

Fix $p > 3/2$ and $m > \frac{9}{2}\frac{p-1}{p-3/2}$ as in the statement Theorem \ref{thm:main}. Take $f_{in} \ge 0$ any initial data satisfying the hypotheses of Theorem \ref{thm:main}, namely that $f_{in}$ is normalized according to \eqref{eq:normalization} and $f_{in}$ satisfies
\begin{equation*}
    \int_{\R^3} f_{in}|\log f_{in}| + f_{in}^p + \brak{v}^mf_{in} \dd v \le K < \infty.
\end{equation*}
Now, we approximate $f_{in}$ by a family of Schwartz class initial datum $\{f_{in}^{\eps}\}_{\eps > 0}$, satisfying
\begin{itemize}
    \item {\bf Regularity:} For each $\eps > 0$, $f_{in}^{\eps} \in \S(\R^3)$;
    \item {\bf Normalization:} For each $\eps > 0$, $f_{in}^{\eps} \ge 0$ and $f_{in}^{\eps}$ satisfies \eqref{eq:normalization}. 
    \item {\bf Uniform bounds:} For each $\eps > 0$,
    \begin{equation*}
        \int_{\R^3} f_{in}^{\eps}|\log f_{in}^{\eps}| + \abs{f_{in}^{\eps}}^p + \brak{v}^mf_{in}^{\eps} \dd v \le 2K
    \end{equation*}
    \item {\bf Convergence:} The family $\{f_{in}\}_{\eps>0}$ converges to $f_{in}$ as $\eps \rightarrow 0^+$ strongly in $L^p \cap L^1_m$ and pointwise almost everywhere.
\end{itemize}
 Consequently, by Theorem \ref{thm:henderson_snelson_tarfulea}, there exist local-in-time Schwartz class solutions $f_{\eps}: [0,T_{\eps}) \times \R^3 \rightarrow \R^+$ to \eqref{eq:landau}, with initial data $f_{in}^{\eps}$ where $T_{\eps}$, the maximal time of existence, is either infinite or satisfies
\begin{equation}\label{eq:continuation}
    \lim_{t\nearrow T_{\eps}} \norm{f_{\eps}(t)}_{L^\infty} = \infty.
\end{equation}
For the remainder of the proof, the parameters $p$ and $m$, the constant $K$, and the family $f_{\eps}$ are fixed.

\begin{flushleft}
    \underline{{\bf Step 2: Uniform estimates}}
\end{flushleft}

We first find a uniform lower bound on $T_{\eps}$. To this end, we apply Proposition \ref{prop:ode} and \ref{prop:degiorgi} to find uniform-in-$\eps$ estimates on the family $\set{f_{\eps}}$. Appealing to Proposition $\ref{prop:ode}$ with $M = 2K$, we find a uniform time $0 < T_0 \le 1$ such that
\begin{equation}\label{eq:ode-feps}
\sup_{0 < t < \min(T_0,T^*)} \norm{f_{\eps}(t)}_{L^p}^p + c_0\int_0^{\min(T_0,T_*)}\int_{\R^3} \brak{v}^{-3}\abs{\nabla f_{\eps}^p}\dd v \dd t \le C\norm{f_{in}^{\eps}}_{L^p}^p \lesssim_K 1,
\end{equation}
where both $C$ and $c_0$ are uniform in $\eps$. Second, note that using the same method as in the proof of Proposition \ref{prop:degiorgi}, i.e. the estimate \eqref{eq:energy1_proof} and \eqref{eq:ode-feps} yields
\begin{equation*}
    \norm{f_{\eps}}_{L^{p+1+\gamma}([0,t]\times \R^3)} \lesssim_K 1.
\end{equation*}
Therefore, we use Proposition \ref{prop:degiorgi} with $\tilde p = p + \gamma$, which implies for each $\eps > 0$,
\begin{equation}\label{defn:alpha}
    \norm{f_{\eps}(t)}_{L^\infty} \le C t^{-\frac{1+\beta}{1 + \beta + \tilde{\gamma}}}, \qquad \tilde{\gamma} = \frac{2(p + \gamma - 3/2)}{3m}\left[m - \frac{9(p + \gamma - 1)}{2(p + \gamma - 3/2)}\right].
\end{equation}
We define $\beta^*(p,m) = \beta^* = \frac{1 + \beta}{1 + \beta + \tilde{\gamma}}$, which clearly satisfies $\beta^* \in (0,1)$.
Therefore,
\begin{equation}\label{eq:Linfty}
    \norm{f_{\eps}(t)}_{L^\infty} \le Ct^{-\beta^*}, \qquad 0 < t < \min(T_0, T_{\eps}),
\end{equation}
where again $C$ is uniform in $\eps$. We note that if $T_{\eps} < T_{0}$ for some $\eps > 0$, then
\begin{equation*}
\lim_{t \nearrow T_{\eps}} \norm{f_{\eps}(t)}_{L^\infty} \le CT_{\eps}^{-\beta^*} < \infty,
\end{equation*}
which contradicts the continuation criteria \eqref{eq:continuation}. So, we have found our uniform lower bound $T_0 \le T_{\eps}$. We now want to construct our strong solution $f(t)$ on $[0, T_0]$. To this end, we would like uniform-in-$\eps$ H\"older regularity estimates to gain strong compactness. We begin with H\"older estimates on the coefficients. 

The following estimate is taken from \cite[Proposition 4.7]{GualdaniGuillen1}, where it is stated in a restricted setting. Nevertheless, the same proof implies the localized estimate we need. Before we state the result, let us mention that the notion of H\"older continuity that we work with respects the parabolic scaling of \eqref{eq:landau}: we define the distance between two points $(t, v), (s, w) \in \R_+ \times \R^3$ as
\begin{equation*}
    d\left((t, v), (s, w)\right) := \left(\abs{t-s} + \abs{v-w}^2\right)^{\frac{1}{2}}.
\end{equation*}
Then, H\"older spaces are defined using this notion of distance, that is $f \in C^{0,\alpha}(\R_+ \times \R^3)$ for some $\alpha \in (0, 1)$ if for any $(t, v), (s, w) \in \R_+ \times \R^3$ there holds
\begin{equation*}
    \abs{f(t, v) - f(s, w)} \leq C  d\left((t, v), (s, w)\right)^\alpha.
\end{equation*}
The constant $C$ is the H\"older seminorm $C = [f]_{C^{0,\alpha}(\R_+ \times \R^3)}$.
Higher order H\"older spaces are then introduced as usual. For more details, see \cite[p.7]{LadyzhenskayaUraltseva-parabolic}.
\begin{lemma}\label{lem:coefficient_holder}
    Suppose $g:[0,T] \times \R^3$ is a solution to \eqref{eq:landau} satisfying $$\norm{g}_{L^\infty([0,T]\times \R^3)} + \norm{g_{in}}_{L\log(L)} + \norm{g_{in}}_{L_2^1} \le L$$ and $g\in C^{k,\alpha}((0,T)\times B_R)$ for some $k\in \N$, $\alpha \in (0,1)$, and $R > 0$. Then, $A[g], \nabla a[g] \in C^{k,\alpha}((0,T)\times B_{R/2})$, depending only on $L,\,k,\,R$.
\end{lemma}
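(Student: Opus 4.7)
The approach is a standard local/nonlocal decomposition for these singular integrals. Fix a smooth cutoff $\chi_R$ with $\chi_R \equiv 1$ on $B_{3R/4}$ and $\text{supp}(\chi_R) \subset B_R$, and write
\begin{equation*}
    A[g] = A_1 + A_2, \qquad \nabla a[g] = b_1 + b_2,
\end{equation*}
where $A_1$ and $b_1$ are the convolutions of the kernels $\tfrac{\Pi(\cdot)}{8\pi |\cdot|}$ and $\nabla \tfrac{1}{4\pi |\cdot|}$ against $\chi_R g$, respectively, and $A_2$, $b_2$ are the convolutions against $(1-\chi_R)g$. For $v \in B_{R/2}$ and $v_* \in \text{supp}(1-\chi_R)$ we have $|v-v_*| \ge R/4$, so the kernel restricted to this regime is smooth in $v$ with all derivatives bounded by powers of $R^{-1}$.

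For the far-field pieces $A_2$ and $b_2$, differentiation under the integral sign in $v$ combined only with the global bound $\norm{g}_{L^\infty} + \norm{g}_{L^1} \lesssim L$ (the $L^1$ bound coming from the normalization \eqref{eq:normalization} and the entropy/moment hypotheses) yields $C^\infty$ bounds in $v$, uniform on $B_{R/2}$. For time regularity on $(0,T)$, I would test the weak formulation of \eqref{eq:landau} against the smooth-in-$v$ kernel $\tfrac{\Pi(v-\cdot)}{|v-\cdot|}(1-\chi_R)(\cdot)$ and integrate by parts in $v_*$, moving derivatives onto the kernel; the resulting integrand only sees $g$ and $\nabla g$ multiplied by bounded coefficients away from the diagonal, so one obtains Lipschitz-in-$t$ estimates for $A_2$ and $b_2$ depending only on $L$, $R$, and the $L^\infty$ bound on the coefficients $A[g], \nabla a[g]$ (which are themselves bounded via Lemma \ref{lem:Alower}).

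For the near-field pieces $A_1$ and $b_1$, the integrands $\chi_R g$ are compactly supported and, on the interior $B_{3R/4}$, inherit the local $C^{k,\alpha}$ regularity of $g$. These are classical Riesz potentials: convolution with $|v|^{-1}$ preserves (and improves by one derivative) the parabolic $C^{k,\alpha}$ class of compactly supported data, while convolution with the kernel $\partial_i |v|^{-1}$ (of homogeneity $-2$) preserves $C^{k,\alpha}$ by standard Calderón–Zygmund/Schauder estimates. A small additional argument is needed because $\chi_R g$ fails to be globally Hölder across $\partial \text{supp}(\chi_R)$; this is absorbed by splitting the singular integral into the region near the diagonal (where only the interior $C^{k,\alpha}$ norm on $B_R$ matters) and the region away from the diagonal (which is $C^\infty$ in $v$ by the same argument as in the far-field step). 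Time regularity of $A_1$ and $b_1$ transfers directly from the assumed parabolic $C^{k,\alpha}$ regularity of $g$ on $(0,T)\times B_R$.

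The main technical obstacle is the time regularity of the far-field pieces $A_2, b_2$: $g$ is not assumed Hölder in time outside $B_R$, so one cannot simply pull the time difference inside the convolution. The resolution is the weak-form integration by parts sketched above, which is available because $g$ solves \eqref{eq:landau} in the sense of distributions and because the kernel restricted to the far-field region is a smooth test function in $v_*$. Iterating this procedure gives $C^{k,\alpha}$ time regularity matching the $v$-regularity, and combining with the near-field contribution completes the proof.
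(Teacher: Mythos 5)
Your proposal broadly follows the same strategy as the paper---a local/nonlocal splitting, with the far-field time regularity obtained by testing the weak formulation and integrating by parts to move derivatives onto the smooth kernel---but the details differ in two places worth noting. First, you decompose the \emph{function}, $g = \chi_R g + (1-\chi_R)g$, whereas the paper decomposes the \emph{kernel}, $K = \varphi K + (1-\varphi)K$ with $\varphi$ supported in $B_{R/2}$. The kernel decomposition sidesteps the issue you correctly flag: $\chi_R g$ is not globally H\"older across $\partial\,\mathrm{supp}(\chi_R)$. With the kernel split, the near-field contribution $K_1 * g(v)$ at $v\in B_{R/2}$ only samples $g$ on $B_R$, so the local $C^{k,\alpha}$ bound suffices with no boundary issue. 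The patch you sketch (splitting the singular integral into near- and far-diagonal parts) does work, but after unwinding it you have effectively reproduced the paper's kernel decomposition, so it is an extra step rather than an alternative. Second, you invoke Riesz-potential/Calder\'on--Zygmund/Schauder estimates for the near-field term, which is more machinery than needed: since $K_1\in L^1$ is compactly supported, one gets $[K_1 * g]_{C^\alpha(B_{R/2})}\le \|K_1\|_{L^1}\,[g]_{C^\alpha(B_R)}$ by a one-line estimate, with no gain of derivatives and no CZ theory. This elementary route is what the paper uses, and it also handles $\nabla a[g]$ uniformly (its localized kernel is again $L^1$), whereas the kernel of homogeneity $-2$ that you consider sits at the edge of $L^1_{\mathrm{loc}}$ and would need the CZ theory you cite. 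For the higher-order ($k\ge 1$) claim, the paper simply commutes integer derivatives onto $g$ and reuses the $k=0$ estimate, which again avoids CZ theory. Your far-field spatial and temporal arguments match the paper's; the proposal is correct in outline, but the function decomposition and the CZ machinery make it somewhat heavier than necessary.
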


\begin{proof}
We prove first the case $k = 0$ for $A[g]$. That is, assume $g\in C^{0,\alpha}([0,T]\times B_R)$. Our goal is to show the same regularity for $A[g]$ on a smaller domain.

Recall that $A[g]$ is defined as convolution with the matrix-valued kernel $K(v) = \frac{\Pi(v)}{8\pi|v|}$. Let $0 \le \varphi \le 1$ a smooth function which is identically $1$ on $B_{R/4}$ and vanishes outside $B_{R/2}$. We decompose the kernel $K = K_1 + K_2$ where $K_1 = \varphi K$ is supported in $B_R$ and $K_2 = (1-\varphi)K \in C^\infty(\R^3)$. Note that $K_1\in L^1(\R^3)$ and $K_2 \in W^{2,\infty}(\R^3)$. Let us first estimate the localized part of the kernel. For $0 < t_1 < t_2 < T$ and $v,w\in B_{R/2}$,
    \begin{equation*}
    \begin{aligned}
        \Abs{K_1\ast g(t_1,v) - K_1 \ast g(t_2,w)} &\le \int_{\R^3} K_1(v^*)\Abs{g(t_1,v - v^*) - g(t_2,w - v^*)}\dd v^*\\
            &\le \norm{g}_{C^\alpha((0,T)\times B_R)} (|t_1 - t_2| + |w-v|^2)^{\alpha/2}\int_{B_R} K_1(v^*) \dd v^*.
    \end{aligned}
    \end{equation*}
    We conclude $K_1\ast g \in C^{0,\alpha}((0,T)\times B_{R/2})$. Next, we look at the smooth, non-localized part of the kernel:
    \begin{equation*}
            \norm{K_2 \ast g}_{W^{1,\infty}_v} \le \norm{\nabla K_2 \ast g}_{L^\infty} \le \norm{\nabla K_2}_{L^\infty}\norm{g}_{L^1_v},
    \end{equation*}
    which implies $K_2 \ast g$ is spatially H\"older continuous on $(0,T)\times \R^3$. For temporal regularity, we differentiate in time, use the equation, and integrate by parts:
    \begin{equation*}
    \begin{aligned}
        \partial_t (K_2 \ast g)(t,v) &= \int_{\R^3} K_2(v - v^*)\partial_t g(t,v^*) \dd v^* \\
            &= -\int_{\R^3} \nabla_{v^*} K_2(v - v^*) \cdot \big[A[g]\nabla_{v^*} g - \nabla_{v^*} a[g]g \big](v^*) \;dv^*.
    \end{aligned}
    \end{equation*}
    Integrating by parts once more, and using that $\sum_j \partial_j A_{ij} = - \partial_i a$, 
    we have
    \begin{equation*}
    \begin{aligned}
        \partial_t (K_2 \ast g)(t,v) = \int_{\R^3}\nabla_{v^*}^2 K_2(v - v^*) : A[g]g - 2\nabla_{v^*} K_2(v - v^*) \cdot \nabla_{v^*} a[g]g \dd v^*.
    \end{aligned}
    \end{equation*}
    Therefore, using Lemma \ref{lem:Alower}, conservation of mass, and $g\in L^\infty([0,T]\times \R^3)$, we estimate
    \begin{equation*}
    \begin{aligned}
        \abs{\partial_t (K_2 \ast g)} &\le \norm{\nabla^2 K_2}_{L^\infty(\R^3)}\norm{A[g]}_{L^\infty_{t,v}}\norm{g}_{L^\infty(0,T;L^1(\R^3))} + 2\norm{\nabla K_2}_{L^\infty(\R^3)}\norm{\nabla a[g]}_{L^\infty_{t,v}}\norm{g}_{L^\infty(0,T;L^1(\R^3))}\\
            &\le C\norm{K_2}_{W^{2,\infty}(\R^3)}\left(\norm{g}_{L^\infty(0,T;L^1)}^{5/3}\norm{g}_{L^\infty_{t,v}}^{1/3} + \norm{g}_{L^\infty(0,T;L^1)}^{4/3}\norm{g}_{L^\infty_{t,v}}^{2/3}\right) < \infty
    \end{aligned}
    \end{equation*}
    Combining our estimates, we conclude $K_2 \ast g \in W^{1,\infty}([0,T] \times \R^3)$. It follows that $A[g] \in C^\alpha_{loc}((0,T)\times B_{R/2})$, as desired. The same technique of splitting the kernel into a localized $L^1$ kernel and a non-localized $W^{2,\infty}$ kernel yields the corresponding result for $\nabla a[g]$.
    To prove the higher order estimates, i.e. when $k > 0$, we note that both $A[g]$ and $\nabla a[g]$ are convolution-type operators. Therefore, we may put an integral number of derivatives on $g$ and use the $k=0$ estimate to conclude.
\end{proof}

As a direct consequence of these local H\"older estimates on the coefficients, we may bootstrap the regularity of $f_{\eps}$, uniformly in $\eps$, using Schauder estimates. 
\begin{lemma}[Higher Regularity]\label{lem:higher_regularity}
The family $f_{\eps}: [0,T_0]\times \R^3\rightarrow \R^+$ constructed above in Step 1 satisfies for each $R > 0$, $0 < t < T_0$, $k \in \N$,
$$ \norm{f_{\eps}}_{C^k((t,T_0)\times B_R)} \le C(K,R,t,k),$$
uniformly in $\eps$.
\end{lemma}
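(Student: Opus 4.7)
The plan is to bootstrap from the uniform $L^\infty$ smoothing estimate \eqref{eq:Linfty} to arbitrary $C^k$ regularity by alternating Lemma \ref{lem:coefficient_holder} with interior parabolic regularity theory. Fix $t \in (0, T_0)$, $R > 0$, and $k \in \N$, and choose a decreasing sequence of parabolic sub-cylinders $Q_j = (t_j, T_0) \times B_{R_j}$ with $t_0 = t/2$, $R_0 = 2R$, and $t_j \searrow t$, $R_j \searrow R$. By \eqref{eq:Linfty} applied at time $t/2$ together with Lemma \ref{lem:Alower}, the coefficients of \eqref{eq:landau-nondiv} are uniformly bounded on $Q_0$ and satisfy the uniform ellipticity bound $A[f_\eps] \ge c_0 \brak{R_0}^{-3} I$ there; hence $f_\eps$ solves a uniformly parabolic non-divergence equation with bounded coefficients and bounded source $f_\eps^2$. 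The interior parabolic Krylov-Safonov estimate (or, equivalently, De Giorgi-Nash-Moser applied to the divergence form \eqref{eq:landau}) then yields $\norm{f_\eps}_{C^{0,\alpha}(Q_1)} \le C(K, t, R)$ uniformly in $\eps$ for some $\alpha = \alpha(K, t, R) > 0$.

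Equipped with this initial Hölder control, Lemma \ref{lem:coefficient_holder} with $k = 0$ promotes $A[f_\eps]$ and $\nabla a[f_\eps]$ to uniform $C^{0,\alpha}(Q_1)$ bounds. Viewing \eqref{eq:landau-nondiv} as a linear parabolic equation with $C^{0,\alpha}$ coefficients and $C^{0,\alpha}$ source, the interior parabolic Schauder estimate yields $\norm{f_\eps}_{C^{2,\alpha}(Q_2)} \le C$ uniformly in $\eps$. Iterating this two-step procedure — Lemma \ref{lem:coefficient_holder} to pass regularity from $f_\eps$ to the coefficients, then Schauder to pass it back to $f_\eps$ through the equation — gains two orders of regularity per round. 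After finitely many rounds we obtain uniform $C^k$ control on a sub-cylinder $Q_j$, and by construction $Q_j \supset (t, T_0) \times B_R$, which proves the lemma.

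The only delicate point is the bookkeeping: both Krylov-Safonov and Schauder lose a small cushion of the domain at each application, and Lemma \ref{lem:coefficient_holder} likewise passes from $B_R$ to $B_{R/2}$. Since $k$ is fixed we iterate only finitely many times, so a geometric choice of $(t_j, R_j)$ keeps every intermediate sub-cylinder inside $(t/2, T_0) \times B_{2R}$ while still containing $(t, T_0) \times B_R$ at the end. All constants appearing in Lemma \ref{lem:coefficient_holder} and in the parabolic regularity estimates depend only on the uniform $L^\infty$, $L\log L$, and $L^1_2$ controls for $f_\eps$, all of which are $\eps$-uniform by the construction in Step~1 and the moment propagation of Lemma \ref{lem:moments}, so the final estimate is likewise $\eps$-uniform.
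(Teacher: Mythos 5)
Your proof is correct and follows essentially the same bootstrap scheme as the paper: uniform $L^\infty$ control via \eqref{eq:Linfty}, De Giorgi--Nash--Moser for initial H\"older regularity, Lemma \ref{lem:coefficient_holder} to transfer that regularity to the coefficients, Schauder to transfer it back, and iteration with a shrinking sequence of cylinders. The only cosmetic difference is that you apply Schauder to the non-divergence form \eqref{eq:landau-nondiv}, gaining $C^{2,\alpha}$ per round, whereas the paper works with the divergence form \eqref{eq:landau} and gains one derivative per round; both are valid.
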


\begin{proof}
    We will prove this by a bootstrap argument. Fix some $t$ and $R > 0$. Then, by Lemma \ref{lem:Alower}, the coefficients $A[f_{\eps}]$ and $\nabla a[f_{\eps}]$ satisfy the global upper bounds
    \begin{equation*}
    \begin{aligned}
        \abs{A[f_{\eps}]} \le C\norm{f_{\eps}}_{L^\infty(0,T;L^1)}^{2/3}\norm{f_{\eps}}_{L^\infty(0,T\times \R^3)}^{1/3} &\le C(K,T),\\
            \abs{\nabla a[f_{\eps}]} \le C\norm{f_{\eps}}_{L^\infty(0,T;L^1)}^{1/3}\norm{f_{\eps}}_{L^\infty(0,T\times \R^3)}^{2/3} &\le C(K,T),
    \end{aligned}
    \end{equation*}
    as well as the locally uniform coercivity estimate:
    \begin{equation*}
        A[f_{\eps}] \ge \frac{c_0}{\brak{v}^3} \ge c(K,R).
    \end{equation*}
    Therefore, we conclude \eqref{eq:landau} is a divergence-form parabolic equation with bounded measurable coefficients. By the parabolic version of the De Giorgi-Nash-Moser theorem (see \cite[Theorem 10.1]{LadyzhenskayaUraltseva-parabolic}), for each $0 < t \le T$ and $R > 0$, there is an $\alpha \in (0,1)$ such that
    \begin{equation*}
       f_{\eps}\in C^{\alpha}((t,T)\times B_{4R}).
    \end{equation*}
    Therefore, by Lemma \ref{lem:coefficient_holder}, $A[f_{\eps}],\,\nabla a[f_{\eps}]\in C^{\alpha}((t,T)\times B_{2R})$. Thus, by parabolic Schauder estimates (see \cite[Theorem 12.1]{LadyzhenskayaUraltseva-parabolic}), we conclude $f_{\eps}\in C^{1,\alpha}((t,T)\times B_R)$, which proves the claim with $k = 1$. Iterating this procedure, gaining one derivative at a time, we conclude $f_{\eps}\in C^{k}((t,T)\times B_R)$ for any $k > 0$.
\end{proof}

\begin{flushleft}
    \underline{{\bf Step 3: Compactness and properties of the limit}}
\end{flushleft}
As a consequence of the uniform estimates, \eqref{eq:Linfty}, Lemma \ref{lem:higher_regularity} and standard compactness results, we have a limit $f:[0,T_0] \times \R^3 \rightarrow \R$ such that $f_{\eps} \rightarrow f$ in the following senses:
\begin{itemize}
    \item Strongly in $C^k((t,T_0)\times B_R)$, for each $k \in \N$, $R > 0$ and each $0 < t < T_0$;
    \item Pointwise almost everywhere in $[0,T_0]\times \R^3$;
    \item Weak-starly in $L^\infty([0,T_0];L^p)$;
    \item Strongly in $L^1([0,T_0] \times \R^3)$,
\end{itemize}
where the last strong convergence follows from uniform integrability, pointwise convergence, and tightness. Note that the convergence immediately implies we retain $f \ge 0$, $f$ satisfies the normalization \eqref{eq:normalization}, 
and $f$ has decreasing entropy. 
Moreover, we can assume that $t^{-\beta^*}f_{\eps} \to t^{-\beta^*}f$ weak-starly in $L^\infty([0,T_0]\times \R^3)$. Therefore, by lower semi-continuity of the norm, we retain the desired short time smoothing estimates, \eqref{eq:Linfty} and, in particular, $f \in L^1(0,T_0;L^\infty)$. We retain also that $f\in C^\infty((t,T_0)\times \R^3)$ for each $0 < t < T_0$, i.e. that $f$ becomes instantaneously smooth.

\begin{flushleft}
    \underline{{\bf Step 4: Passing to the limit}}
\end{flushleft}

We now pass to the limit and show $f$ constructed in the previous step satisfies the Landau equation \eqref{eq:landau} on $(0,T_0)\times \R^3$. Since $f$ is smooth on this domain, the notion of solution is unambiguous and, in particular, it suffices to show $f$ is a distributional solution to \eqref{eq:landau}. We first show convergence of the non-local coefficients: By Lemma \ref{lem:Alower},
\begin{equation*}
    \big\Vert A[f - f_{\eps}]\big\Vert_{L^1(0,T_0;L^\infty(\R^3))} \le C\norm{f - f_{\eps}}_{L^1(0,T_0;L^1(\R^3))}^{2/3}\norm{f - f_{\eps}}_{L^1(0,T_0;L^\infty(\R^3))}^{1/3},
\end{equation*}
where the right hand side tends to $0$ as $\eps \to 0^+$, using the short-time smoothing estimates \eqref{eq:Linfty} and strong convergence in $L^1([0,T_0]\times \R^3)$. Using Lemma \ref{lem:Alower} again, we have the analogous estimate for $\nabla a[f - f_{\eps}]$:
\begin{equation*}
    \big\Vert\nabla a[f - f_{\eps}]\big\Vert_{L^1(0,T_0;L^\infty(\R^3))} \le C\norm{f - f_{\eps}}_{L^1(0,T_0;L^1(\R^3))}^{1/3}\norm{f - f_{\eps}}_{L^1(0,T_0;L^\infty(\R^3))}^{2/3},
\end{equation*}
which again converges to zero as $\eps \to 0^+$. Now, integrating by parts, $f_{\eps}$ satisfies the following distributional formulation of \eqref{eq:landau}: For any $\varphi \in C^\infty_c((0,T_0)\times \R^3)$,
\begin{equation*}
    \int_0^{T_0}\int_{\R^3} \varphi \partial_t f_{\eps} \dd v \dd t = -\int_0^{T_0}\int_{\R^3} \nabla \varphi \cdot \left(A[f_{\eps}]\nabla f_{\eps} - \nabla a[f_{\eps}]f_{\eps}\right) \dd v\dd t
\end{equation*}
Therefore, global convergence of the non-local coefficients and the local compactness in $C^1$ is sufficient to show that $f$ satisfies \eqref{eq:landau} in the following distributional sense: for each $\varphi \in C^\infty_c((0,T_0) \times \R^3)$,
\begin{equation*}
    \int_0^{T_0}\int_{\R^3} \partial_t f \varphi \dd v = - \int_0^{T_0}\int_{\R^3} \nabla \varphi \cdot \left(A[f]\nabla f - \nabla a[f]f\right) \dd v\dd t.
\end{equation*}
Because $f$ is smooth, we may integrate by parts and conclude $f$ is a classical solution to the Landau equation, i.e. $f$ solves \eqref{eq:landau} pointwise.

\begin{flushleft}
    \underline{{\bf Step 5: Behavior at initial time}}
\end{flushleft}

It only remains to show that the initial data is obtained in the appropriate strong sense.
To this end, we recall that each $f_{\eps}$ solves the following distributional form of \eqref{eq:landau}: For any $\varphi \in C^\infty_c([0,T_0)\times \R^3)$,
\begin{equation*}
    \int_0^{T_0}\int_{\R^3} \partial_t f_{\eps}\varphi \dd v = \int_0^{T_0}\int_{\R^3} \nabla^2 \varphi : A[f_{\eps}]f_{\eps} + 2\nabla \varphi \cdot \nabla a[f_{\eps}]f_{\eps} \dd v\dd t.
\end{equation*}
We estimate the first term on the right hand side using H\"older's inequality and Lemma \ref{lem:Alower} as
\begin{equation*}
\begin{aligned}
    \int_0^{T_0} \nabla^2 \varphi : A[f_{\eps}]f_{\eps} \dd v\dd t &\le \norm{\varphi}_{L^1\left(0,T_0;W^{2,\frac{p}{p-1}}\right)}\norm{f_{\eps}}_{L^\infty(0,T_0;L^p)}\norm{A[f_{\eps}]}_{L^\infty([0,T_0]\times \R^3)}\\
    &\le \norm{\varphi}_{L^1\left(0,T_0;W^{2,\frac{p}{p-1}}\right)}\norm{f_{\eps}}_{L^\infty(0,T_0;L^p)}^{1 + \frac{1}{3}\frac{p}{p-1}}\norm{f_{\eps}}_{L^\infty(0,T_0;L^1)}^{\frac{2}{3}\frac{p-3/2}{p-1}}\\
    &\le C(K)\norm{\varphi}_{L^1\left(0,T_0;W^{2,\frac{p}{p-1}}\right)}.
\end{aligned}
\end{equation*}
We then estimate the second term using H\"older's inequality, the Sobolev embedding $\dot{W}^{1,\frac{p}{p-1}}(\R^3) \embeds L^{\frac{2p-3}{3p}}$ for $p > 3/2$,  Lemma \ref{lem:Alower}, and the Hardy-Littlewood-Sobolev inequality as
\begin{equation*}
\begin{aligned}
    \int_0^{T_0} \nabla \varphi \cdot \nabla a[f_{\eps}]f_{\eps} \dd v\dd t &\le \norm{\nabla \varphi}_{L^1\left(0,T_0;L^{\frac{3p}{2p-3}}\right)}\norm{\nabla a[f_{\eps}]}_{L^\infty(0,T_0;L^3)}\norm{f_{\eps}}_{L^\infty(0,T_0;L^p)}\\
        &\le \norm{\nabla^2 \varphi}_{L^1\left(0,T_0;L^{\frac{p}{p-1}}\right)}\norm{f_{\eps}}_{L^\infty(0,T_0;L^{3/2})}\norm{f_{\eps}}_{L^\infty(0,T_0;L^p)}\\
        & \le C(K)\norm{\nabla^2 \varphi}_{L^1\left(0,T_0;L^{\frac{p}{p-1}}\right)}.
\end{aligned}
\end{equation*}
Therefore, by a duality argument, we conclude that
\begin{equation*}
    \norm{\partial_t f_{\eps}}_{L^\infty(0,T_0;W^{-2,p})} \le C(K).
\end{equation*}
Combined with $\norm{f_{\eps}}_{L^\infty(0,T_0;L^p)} \le C(K)$, the Aubin-Lions Lemma yields compactness of $f_{\eps}$ in $C(0,T_0;W^{-1,p})$. Therefore, up to a further subsequences we may assume $f_{\eps} \to f$ in $C(0,T_0;W^{-1,p})$. In particular, $f_{\eps}(0) = f_{in}^{\eps}$ implies $f$ is continuous in time and $f(0) = f_{in}$ in the sense of distributions. Using $f \in L^\infty(0,T_0;L^p)$ once more, $f$ is weakly continuous in time taking values in $L^p$ and $f(0) = f_{in}$ almost everywhere. By lower-semicontinuity of the norm in the weak topology, we also have
\begin{equation*}
    \norm{f_{in}}_{L^p} \le \liminf_{t\searrow 0^+} \norm{f(t)}_{L^p}.
\end{equation*}
Finally, performing a standard $L^p$ energy estimate on $f_{\eps}$ and throwing away the coercive term, we find
\begin{equation*}
    \frac{\dd}{\dd t}\int_{\R^3} f_{\eps}^p(t) \dd v \le C(p)\int_{\R^3} f_{\eps}^{p+1} \dd v \le C(p)\norm{f_{\eps}(t)}_{L^\infty}\norm{f_{\eps}}_{L^p}^p \le Ct^{-\beta^*},
\end{equation*}
where the constant $C$ is uniform in $\eps$. Therefore, integrating in time, for any $0 < t < T_0$,
\begin{equation*}
    \sup_{0 < s < t} \norm{f_{\eps}(s)}_{L^p_v} \le \norm{f^{\eps}_{in}}_{L^p_v} + Ct^{1-\beta^*}.
\end{equation*}
We take $\eps \to 0^+$ and use that $f_{in}^{\eps} \rightarrow f_{in}$ strongly in $L^p$ and $f_{\eps} \to f$ weak starly in $L^\infty(0,T;L^p)$ to obtain
\begin{equation*}
    \sup_{0 < s < t} \norm{f(s)}_{L^p} \le \norm{f_{in}}_{L^p} + Ct^{1-\beta^*}, \qquad \text{ for each }0 < t < T_0.
\end{equation*}
Since $f$ is weakly continuous taking values in $L^p$, for every $0 < t < T_0$,
\begin{equation*}
    \norm{f(t)}_{L^p} \le \norm{f_{in}}_{L^p} + Ct^{1-\beta^*}.
\end{equation*}
Finally, taking $t \to 0^+$ and using $\beta^* \in (0,1)$, we obtain
\begin{equation*}
\limsup_{t \searrow 0^+}\norm{f(t)}_{L^p} \le \norm{f_{in}}_{L^p}.
\end{equation*}
We have shown $\lim_{t\searrow 0^+} \norm{f(t)}_{L^p} = \norm{f_{in}}_{L^p}$. Combined with $f(t) \weak f_{in}$ in $L^p$, we deduce that $f(t) \to f_{in}$ strongly in $L^p$, which concludes the proof of Theorem \ref{thm:main}.

\subsection{Continuation Criterion}

In this part, we prove the continuation criterion contained in Theorem \ref{thm:continuation}. To this end, assume $f:[0,T)\times \R^3 \rightarrow \R^+$ is a solution constructed in the proof of Theorem \ref{thm:main}. We need only show that if $\limsup_{t \nearrow T}\norm{f(t)}_{L^p} < \infty$, then the solution $f$ may be continued smoothly past time $T$. Indeed, for each $0 < \eps < T$, set $g^{\eps}_{in} = f(T - \eps)$. Then, the propagation of moments from Lemma \ref{lem:moments} and the decay of entropy imply the a uniform-in-$\eps$ bound:
\begin{equation*}
    \int_{\R^3} g^{\eps}_{in}\abs{\log(g^{\eps}_{in})} + \abs{g^{\eps}_{in}}^p + \brak{v}^{m}g^{\eps}_{in} \dd v \le K.
\end{equation*}
Therefore, Theorem \ref{thm:main} and the invariance of \eqref{eq:landau} under translations in time implies there is a uniform time $2T_0$, such that $g^{\eps}_{in}$ gives rise to a smooth solution $g_{\eps}$ to \eqref{eq:landau} on $[T-\eps,T+2T_0 - \eps]$ with $g^{\eps}(T - \eps) = g^{\eps}_{in}$. Pick $\eps = T_0$ and set $\tilde f$ to be the concatenation of $g_{\eps}$ and $f$:
\begin{equation*}
    \tilde f(t) = \begin{cases} f(t) & 0 \le t \le T - T_0\\ g_{\eps}(t) &T - T_0 \le t \le T + T_0.    \end{cases}
\end{equation*}
Note that by construction $f(T - T_0) = g_{\eps}(T - T_0)$ and both $f$ and $g$ are solutions to \eqref{eq:landau} on $[T - T_0, T)$ that lie in $L^1(T-T_0, T;L^\infty)$ by the short time smoothing estimates of Theorem \ref{thm:main}. Therefore, by the uniqueness result of Fournier in \cite{Fournier}, $f(t) = g(t)$ on $[T - T_0,T)$ and $\tilde f$ is a smooth solution to \eqref{eq:landau} on $[0,T + T_0)$, as desired.

\section{Uniqueness}\label{sec:prodiserrin}

In this section, we prove Theorem \ref{thm:prodi_serrin} by extending the a priori estimates of the preceding sections to a more general class of solutions to \eqref{eq:landau}. In Section \ref{sec:ode} and Section \ref{sec:degiorgi}, we showed that solutions to \eqref{eq:landau} which are smooth, rapidly decay, and belong to $L^\infty(0,T;L^p)$ for some $p > 3/2$ are quantitatively bounded in $L^1(0,T;L^\infty)$. In this section, we extend this $L^1(0,T;L^\infty)$ bound to locally Lipschitz solutions $g\in L^r(0,T;L^p)$ for some $p > 3/2$ and $r > \frac{2p}{2p-3}$. The proof is divided into two steps:

In step one, mimicking Section \ref{sec:ode}, we use an ODE argument to quantitatively bound the potential growth of of the $L^p$ norm. More precisely, we show the estimate:
\begin{equation}\label{eq:E_bound}
    \sup_{t < \tau < T} \norm{g(t)}_{L^p}^p + \int_{t}^T\int_{\R^3} \brak{v}^{-3} \abs{\nabla g^{p/2}}^2 \dd v \dd \tau \le \frac{C}{t^{p/r}}, \qquad\text{for each } 0 < t < T.
\end{equation}
This gives a quantitative estimate on the $L^\infty(t,T;L^p)$ norm of $g$. We give first a formal proof of \eqref{eq:E_bound}, which is rigorous for rapidly decaying solutions. In step two, we use the methods from Section \ref{sec:degiorgi} to obtain an estimate on the potential growth of the $L^\infty$ norm of $g$. We show $\norm{g(t)}_{L^\infty} \le Ct^{-1 + \delta}$ for some small $\delta > 0$, which implies $g \in L^1(0,T;L^\infty)$. We conclude with a rigorous proof of \eqref{eq:E_bound} for merely locally Lipschitz solutions to \eqref{eq:landau}, which depends crucially on the so called $\eps$-Poincar\'{e} inequalities used to propagate $L^p$ norms for solutions to \eqref{eq:landau} (see \cite{GualdaniGuillen2,AlonsoBaglandDesvillettesLods_ProdiSerrin, AlonsoBaglandsDesvillettesLods}). The particular version used was recently introduced as \cite[Proposition 1.7]{AlonsoBaglandDesvillettesLods_ProdiSerrin}.

\begin{flushleft}
    \textbf{\underline{Step 1: Formal proof of \eqref{eq:E_bound}}}
\end{flushleft}

Let $g \in L^r(0,T; L^p) \cap W^{1,\infty}(\Omega)$ for each $\Omega$ compactly contained in $(0,T)\times \R^3)$ be a solution to \eqref{eq:landau} with initial datum $g_{in}$, attained in the sense of distributions. We assume that $g_{in}$ satisfies the bounds
\begin{equation*}
    \int_{\R^3} \brak{v}^m g_{in} \dd v \le M \qquad \text{and} \qquad \int_{\R^3} g_{in} \abs{\log(g_{in})} \dd v \le M,
\end{equation*}
for some $p > 3/2$, $r > \frac{2p}{2p-3}$, and $m > \frac{9}{2}\frac{p-1}{p-3/2}$. Following the a priori estimates in Section \ref{sec:ode}, we use $g^{p-1}$ as a test function\footnote{The local regularity of $g$ is not sufficient to justify using $g^{p-1}$ as a test function, which is the reason why this argument is only formal. One should instead use $\varphi_R g^{p-1}$ as a test function, where $\varphi_R$ is a smooth cutoff to $B_R$. Integrating in time, one obtains an integral equation for the truncated $L^p$ norm. Using $g \in L^r(0,T;L^p)$, one obtains a uniform-in-$R$ estimate on the truncated $L^p$ norm analagous to \eqref{eq:ODE_ProdiSerrin_integral_form}, from which \eqref{eq:norm_bound} and \eqref{eq:E_bound} follow by the monotone convergence theorem.} and derive a differential inequality for $y(t) := \norm{g(t)}_{L^p}^p$ of the form 
\begin{equation}\label{eq:ODE_ProdiSerrin}
    \frac{\dd y}{\dd t} + \int_{\R^3}\brak{v}^{-3}\abs{\nabla g^{p/2}}^2 \dd v \le Cy + Cy^{\alpha_1}, \qquad \text{for each } 0 < t < T,
\end{equation}
where the constant depends on $g$ only through $M$, $T$, $m$, and $p$. The exponent $\alpha_1$ is the same as in \eqref{eqn:ODE1}, which we recall is
\begin{equation*}
    \alpha_1 := \frac{(2p^2 - p)m - 9p^2}{(2p^2 - 3p)m - 9p^2 + 9p} \qquad \text{and} \qquad p\alpha_1  - p = \frac{2mp - 9p}{m(2p - 3) - 9p + 9}.
\end{equation*}
To avoid the significant regularity issues at time $t = 0$, we integrate over $[s, t]$, for $0 < s < t < T$, which yields
\begin{equation}\label{eq:ODE_ProdiSerrin_integral_form}
\begin{aligned}
    y(t) + \int_s^t \int_{\R^3}\brak{v}^{-3}\abs{\nabla g^{p/2}}^2 \dd v \dd \tau  &\leq  y(s) + C\int_s^t y(\tau) \dd \tau + C \int_s^t  \left(\int_{\R^3} g^{p}(\tau,v) \dd v\right)^{\alpha_1-1}y(\tau) \dd \tau.
\end{aligned}
\end{equation}
By Gr\"onwall's inequality, we obtain the quantitative bound
\begin{equation*}
   \sup_{s < t < T} y(t) \le y(s)\exp\left(C(T-s) + C\int_s^T\norm{g(\tau)}_{L^p}^{p\alpha_1 - p} \right).
\end{equation*}
A straightforward computation implies 
\begin{equation*}
    p\alpha_1  - p = \frac{2mp - 9p}{m(2p - 3) - 9p + 9} \qquad \text{and} \qquad \lim_{m\to \infty} p\alpha_1  - p = \frac{2p}{2p-3}. 
\end{equation*}
Since is $p\alpha_1 - p$ is decreasing as a function of $m$ provided $m > \frac{9}{2}\frac{p-1}{p-3/2}$, we may take $m$ even larger, depending only on $p$ and $r$, so that $\frac{2p}{2p-3} < p\alpha_1 - p < r$. 
For this choice of $m$,
\begin{equation*}
y(t) \le Cy(s) \exp\left(\int_0^T \norm{g(\tau)}_{L^p}^r \dd \tau \right) \le Cy(s) \qquad \text{for each }0 < s < t < T.
\end{equation*}
Raising to the power $r/p$ and averaging over $s\in[0,t]$, we obtain a quantitative bound on the appearance of the $L^p$ norm of $g$:
\begin{equation}\label{eq:norm_bound}
\norm{g(t)}_{L^p}^r \le \frac{C}{t}\int_0^t \norm{g(s)}_{L^p}^r \dd s \le \frac{C}{t}.
\end{equation}
Consequently, using \eqref{eq:norm_bound} to control the right hand side of \eqref{eq:ODE_ProdiSerrin_integral_form}, we deduce \eqref{eq:E_bound}:
\begin{equation}
    \sup_{t < \tau < T} \norm{g(t)}_{L^p}^p + \int_{t}^T\int_{\R^3} \brak{v}^{-3} \abs{\nabla g^{p/2}}^2 \dd v \dd \tau \le \frac{C}{t^{p/r}}, \qquad\text{for each } 0 < t < T.
\end{equation}

\begin{flushleft}
    \textbf{\underline{Step 2: $L^\infty(t,T;L^p) \to L^1(0,T;L^\infty)$ regularization estimates}}
\end{flushleft}
Our goal now is to use Proposition \ref{prop:degiorgi} to obtain a quantitative bound on the $L^\infty$ norm of $g$. To this end, we use interpolation as in \eqref{eqn:interpolation_estimate} to bound higher $L^q([0,T]\times \R^3)$ norms of $g$:
\begin{equation*}
    \norm{g}^q_{L^q} \leq C\left(\norm{\brak{\cdot}^{-\frac{3}{2}}\nabla g^{\frac{p}{2}}}_{L^2}^2 +\norm{g}_{L^p}^p\right) \norm{g}_{L^p}^{p\left(\frac{q - p -\frac{2}{3}}{p-1}\right)}\norm{\langle\cdot\rangle^\alpha g}_{L^1}^{\frac{5p-3q}{3(p-1)}}, \qquad \text{where }\alpha = \frac{9(p-1)}{5p-3q}.
\end{equation*}
Integrating in time and using \eqref{eq:E_bound}, we find for each $q \in \left(p,\frac{5p}{3}\right)$, for $m > \alpha(p,q)$, 
\begin{equation*}
    A_q(t) := \int_t^T\int_{\R^3} g^q \dd v \dd \tau \le C t^{-\frac{p}{r}\frac{q - 5/3}{p-1}}
\end{equation*}
Therefore, using the $L^q(0,T;L^q) \to L^\infty(t,T;L^\infty)$ smoothing estimate\footnote{Technically, Proposition \ref{prop:degiorgi} is stated for smooth, rapidly decaying solutions. Nevertheless, we claim that the estimate applies to $g$ without change. The qualitative regularity and decay is solely used to ensure that $(f-k)_+^{p-1}$ for constant $k \in \R^+$ can be used as a test function in \eqref{eq:landau} in the proof of the energy estimate \eqref{eq:energy2}. Crucially, \eqref{eq:energy2} can be easily justified for $g$ because $g\in W^{1,\infty}_{loc}$ and $g$ satisfies \eqref{eq:E_bound}.} in Proposition \ref{prop:degiorgi}, we obtain
\begin{equation}\label{eq:Linfty_bound}
\begin{aligned}
    \norm{f(t)}_{L^\infty} \le CA_q(t/2)^{\frac{\beta}{\gamma}} + \left(A_q(t/2)^{\beta}t^{-1 - \beta}\right)^{\frac{1}{1+\beta+\gamma}} &\le Ct^{-\frac{p\beta}{r\gamma}\frac{q - 5/3}{p-1}} + Ct^{-\frac{p}{r}\frac{q - 5/3}{p-1}\frac{\beta}{1+\beta+\gamma} - \frac{1 + \beta}{1+\beta+\gamma}}\\
        &:= Ct^{\omega_1} + Ct^{\omega_2}.
\end{aligned}
\end{equation}
where $\beta = \beta(m)$ and $\gamma = \gamma(q,m)$ are exponents given explicitly by the formulas:
\begin{equation*}
    \beta = \frac{2}{3} - \frac{3}{m} \qquad\text{and} \qquad \gamma = \frac{2(q-1)-3}{3m}\left[m - \frac{9(q-2)}{2(q-1)-3}\right].
\end{equation*}
To conclude $g\in L^1(0,T;L^\infty)$, it remains only to show that there is a choice of $q$ and $m$ for which the exponents $\omega_1$ and $\omega_2$ in \eqref{eq:Linfty_bound} are strictly greater than $-1$. Instead of computing directly, we take the limit as $q\to 5p/3$ and $m \to \infty$ in the exponents. In particular,
\begin{equation*}
    \lim_{m\to\infty} \beta = \frac{2}{3} \qquad \text{and} \qquad \lim_{m\to\infty,\, q\to 5p/3} \gamma = \frac{10p-15}{9}
\end{equation*}
and, consequently, since $r > \frac{2p}{2p-3}$,
\begin{equation*}
\begin{aligned}
        &\lim_{m\to\infty,\, q\to 5p/3} \omega_1 = -\frac{10p}{(10p-15)r} > -1\\
        &\lim_{m\to\infty,\, q\to 5p/3} \omega_2 = -\frac{\frac{10p}{9r} + \frac{5}{3}}{1 + \frac{2}{3} + \frac{10p - 15}{9}} = -\frac{10p + 15r}{10pr} > -1
\end{aligned}
\end{equation*}
We conclude that for $m$ sufficiently large (depending only on $p$ and $r$) we can use the $L^q(0,T;L^q) \to L^\infty(t,T;L^\infty)$ smoothing estimate from Proposition \ref{prop:degiorgi} with $q$ close enough to $5p/3$ (depending on $r$) so that $\omega_1 > -1$ and $\omega_2 > -1$. Thus, we conclude for $g_{in} \in L^1_m$ for $m$ sufficiently large, $g \in L^1(0,T; L^\infty)$. By the uniqueness result of Fournier \cite{Fournier}, we conclude any two locally Lipschitz solutions with the same initial data $g_{in}$ that both also belong to $L^r(0,T;L^p)$ are equal. \qedhere

We conclude the manuscript with a rigorous proof of \eqref{eq:E_bound}.

\begin{flushleft}
    \textbf{\underline{Rigorous Proof of \eqref{eq:E_bound}}}
\end{flushleft}

We take $g$ a locally Lipschitz weak solution to \eqref{eq:landau} as in the statement of Theorem \ref{thm:prodi_serrin}. Combining $g \in L^r(0,T;L^p)$ for $p > 3/2$ with Lemma \ref{lem:Alower} and interpolation of Lebesgue spaces, we conclude that the nonlocal coefficients satisfy the pointwise in time bounds:
\begin{equation*}
    \norm{A[f]}_{L^\infty} \le \norm{g}_{L^1}^{\frac{2p-3}{3p-3}}\norm{g}_{L^p}^{\frac{p}{3p-3}} \quad \text{and} \quad \norm{\nabla a[f]}_{L^3} \le \norm{f}_{L^{3/2}} \le \norm{f}_{L^1}^{\frac{2p-3}{3p-3}}\norm{f}_{L^p}^{\frac{p}{3(p-1)}}
\end{equation*}
Consequently,
\begin{equation*}
    A[g] \in L^{\frac{3r(p-1)}{p}}(0,T;L^\infty) \quad \text{and} \quad \nabla a[g] \in L^{\frac{3r(p-1)}{p}}(0,T;L^3)
\end{equation*}
Since $g \in W^{1,\infty}(\Omega)$ for each $\Omega$ compactly contained in $(0,T)\times \R^3$, a rather straightforward density argument implies for any $\varphi \in L^\infty(s,t;W^{1,\infty}(\R^3))$ with compact support $\Omega$,
\begin{equation}\label{eq:weak_formulation}
    \int_0^T\int_{\R^3} \varphi \partial_t g \dd v \dd \tau = - \int_0^T\int_{\R^3} \nabla \varphi \cdot \left(A[g]\nabla g - \nabla a[g] g\right) \dd v \dd \tau.
\end{equation}

For $R > 1$, we take $\varphi_R$ to be the cutoff function:
\begin{equation*}
    \varphi_R(v) = \begin{cases} 1 - R^{-(p+1)}(|v| - R)_+^{p+1} & \text{if }|v| < 3R/2 \\ R^{-(p+1)}(2R - |v|)_+^{p+1} &\text{otherwise.} \end{cases}
\end{equation*}
We note that $\varphi_R \in W^{1,\infty}(\R^3)$ satisfies the usual cutoff properties, namely, $0 \le \varphi_R \le 1$, $\varphi_R = 1$ on $B_R$, and $\varphi_R = 0$ on $\R^3 \setminus B_{2R}$. However, $\varphi_R$ satisfies the additional useful pointwise inequality:
\begin{equation}\label{eq:pointwise_test_function}
    \abs{\nabla \varphi_R}^2 \le CR^{-2(p+1)}\varphi_R^{\frac{2p}{p+1}}.
\end{equation}
Since $g$ is locally Lipshitz and $\varphi_R$ is compactly supported, $g^{p-1} \varphi_R^2 \chi_{[s,t]}$ an admissible test function in \eqref{eq:weak_formulation}. Therefore, the fundamental theorem of calculus for Lipschitz functions guarantees
\begin{equation*}
    \int_{\R^3} \varphi_R^2 g^p(t) \dd v - \int_{\R^3} \varphi_R^2 g^p(s) \dd v =  - p\int_0^T\int_{\R^3} \nabla \left(\varphi_R^2 g^{p-1} \right) \cdot \left(A[g]\nabla g - \nabla a[g] g\right) \dd v \dd \tau.
\end{equation*}
We rewrite the right hand side by rearranging powers of $g$ and integrating by parts as
\begin{align}
    RHS &= -\frac{4(p-1)}{p}\int_s^t\int_{\R^3} \varphi_R^2 \nabla g^{p/2} \cdot A[g] \nabla g^{p/2} \dd v \dd \tau + 2\int_s^t\int_{\R^3} g^{p/2}\varphi_R \nabla \varphi_R  \cdot A[g]\nabla g^{p/2} \dd v \dd\tau \\
        &\quad + \int_s^t\int_{\R^3} g^p \nabla \varphi_R^2 \cdot \nabla a[g] \dd v \dd\tau + (p-1)\int_s^t\int_{\R^3} \varphi_R^2 g^{p+1} \dd v\dd\tau := \sum_{i=1}^4 I_i.
\end{align}
The term $I_1$ is the coercive term. The error terms $I_2$ and $I_3$ coming from the cutoff $\varphi_R$ are bounded together using the identity $\nabla \cdot A[g] = \nabla a[g]$ as
\begin{equation}
\begin{aligned}
    I_3 &= \int_s^t\int_{\R^3} \nabla \varphi_R^2 \cdot \nabla \cdot (A[g] g^p) - A[g]\nabla g^p \dd v \dd\tau = -\int_s^t\int_{\R^3} \nabla^2 \varphi_R^2 : A[g]g^p \dd v \dd\tau  - I_2\\
        &\le -I_2 + \frac{1}{R^2}\int_s^t \norm{A[g(\tau)]}_{L^\infty} \norm{\varphi_R g(\tau)^{p/2}}_{L^2}^2 \dd\tau.
\end{aligned}
\end{equation}
Lastly, the main term $I_4$ presents a difficulty not seen in the global bounds anywhere else in the manuscript. The cutoff functions must be distributed correctly in order to close the argument. To this end, we bound using the $\eps$-Poincare lemma found as Proposition 1.7 in \cite{AlonsoBaglandDesvillettesLods_ProdiSerrin}. For the reader's convenience, we reproduce the proof here:
To bound $I_4$, we use H\"older's inequality, the fractional Sobolev embedding $\dot{H}^s(\R^3) \embeds L^{\frac{2q}{q-1}}(\R^3)$, the Gagliardo-Nirenberg-Sobolev inequality, and Young's inequality: for any $q > 3/2$ and any $\eps > 0$,
\begin{equation}
\begin{aligned}
    \int_{\R^3} \varphi_R^2g^{p+1} &\le \norm{\brak{v}^{3}g}_{L^q(B_{2R})}\norm{\varphi_Rg^{p/2}\brak{v}^{-3/2}}_{L^{\frac{2q}{q-1}}}^2\\
        &\le \norm{\brak{v}^{3}g}_{L^q(B_{2R})}\norm{\varphi_R g^{p/2}\brak{v}^{-3/2}}_{\dot{H}^s}^2\\
        &\le \norm{\brak{v}^{3}g}_{L^q(B_{2R})}\norm{\varphi_R g^{p/2}\brak{v}^{-3/2}}_{\dot{H}^1}^{2s}\norm{g^{p/2}\brak{v}^{-3/2}}_{L^2}^{2-2s}\\
        &\le \eps\norm{\varphi_R g^{p/2}\brak{v}^{-3/2}}_{\dot{H}^1}^2 + C(\eps)\norm{\brak{v}^{3}g}_{L^q(B_{2R})}^{\frac{1}{1-s}}\norm{\varphi_R g^{p/2}\brak{v}^{-3/2}}_{L^2}^2,
\end{aligned}
\end{equation}
where the relation $\frac{q-1}{2q} = \frac{1}{2} + \frac{s}{3}$ implies
\begin{equation}
    \int_{\R^3} \varphi_R^2 g^{p+1} \le \eps\int_{\R^3} \abs{\nabla (\varphi_R g^{p/2}\brak{v}^{-3/2})}^2 \dd v + C(\eps)\norm{\brak{v}^{3}g}_{L^q(B_{2R})}^{\frac{2q}{2q-3}} \int_{\R^3} \varphi_R^2 g^{p}\brak{v}^{-3} \dd v.
\end{equation}
By explicit computation, the pointwise estimate \eqref{eq:pointwise_test_function}, and Young's inequality,
\begin{equation}
\begin{aligned}
    \abs{\nabla \left( \varphi_R g^{p/2} \brak{v}^{-3/2}\right)}^2 &\le C\brak{v}^{-3}\varphi_R^2\abs{\nabla g^{p/2}}^2 + C\brak{v}^{-3}g^p \abs{\nabla \varphi_R}^2 +  C\brak{v}^{-5}\varphi_R^2 g^{p}\\
        &\le C\brak{v}^{-3}\varphi_R^2\abs{\nabla g^{p/2}}^2 + \varphi_R^2g^{p+1} + C\varphi_R^2 g^p + CR^{-2(p+1)^2}\brak{v}^{-3(p+1)}.
\end{aligned}
\end{equation}
Therefore, picking $\eps$ sufficiently small, and using the lower bound for $A[g]$ guaranteed by Lemma \ref{lem:Alower}, for any $s < t < T$ and for any $q > 3/2$,
\begin{equation}\label{eq:ODE_Prodi_Serrin_rigorous}
\begin{aligned}
    \int_{\R^3} \varphi_R^2 g^p(t) \dd v + c_0&\int_s^t\int_{\R^3} \brak{v}^{-3}\varphi_R^2 \abs{\nabla g^{\frac{p}{2}}}^2\dd v \dd \tau \leq \int_{\R^3} \varphi_R^2 g^p(s) \dd v\\
        &+ C\int_s^t\left( R^{-2(p+1)^2} + R^{-2}\norm{A[g]}_{L^\infty} + \norm{g\brak{v}^3}^{\frac{2q}{2q-3}}_{L^q(B_{2R})} \right) \left( \int_{\R^3}\varphi_R^2 g^p \dd v \right) \dd \tau.
\end{aligned}
\end{equation}
By Gr\"onwall's lemma applied to the function $\tau \mapsto \int_{\R^3} \varphi_R^2 g^p(\tau) \dd v$, we conclude the explicit bound: for any $0 < s < t < T$ and any $q > 3/2$,
\begin{equation}\label{eq:ODE_Prodi_Serrin_norm_rigorous}
    \int_{\R^3} \varphi_R^2 g(t)^p \dd v \le  \left(\int_{\R^3} \varphi_R^2 g^p(s) \dd v\right) \exp\left(C\int_0^T R^{-2(p+1)^2} + R^{-2}\norm{A[g]}_{L^\infty} + \norm{g\brak{v}^3}^{\frac{2q}{2q-3}}_{L^q(B_{2R})} \dd \tau \right).
\end{equation}
Since we have already shown $A[g] \in L^1(0,T;L^\infty)$, the monotone convergence theorem implies
\begin{equation}\label{eq:ODE_Prodi_Serrin_norm_rigorous2}
    \int_{\R^3} g(\tau)^p \dd v \le  \left(\int_{\R^3} g^p(s) \dd v\right) \exp\left(C\int_0^T \norm{g\brak{v}^3}^{\frac{2q}{2q-3}}_{L^q} \dd \tau \right).
\end{equation}
To conclude as in the formal proof given in Step 1, we need only show that $\brak{v}^{3}g\in L^{\frac{2q}{2q-3}}(0,T;L^q)$ for some $q > 3/2$. Because $(r,p)$ is a subcritical exponent pair--meaning $\frac{2}{r} + \frac{3}{p} < 2$--and $(\infty,1)$ is a supercritical pair--meaning $\frac{2}{\infty} + \frac{3}{1} > 2$--we can interpolate between $L^r(0,T;L^p)$ and $L^\infty(0,T;L^1_m)$ to find $(\frac{2q}{2q-3},q)$, a critical exponent pair. Interpolating between $L^p$ and $L^1_m$, for $3/2 < q < p$,
\begin{equation*}
    \norm{\brak{v}^3 g}_{L^q} \le \norm{\brak{v}^\frac{3}{\theta} g}_{L^1}^{\theta}\norm{g}_{L^p}^{1-\theta}, \quad \text{where} \quad \frac{\theta}{1} + \frac{1-\theta}{p} = \frac{1}{q}.
\end{equation*}
Solving for $\theta$, we find $\theta = \frac{p-q}{q(p-1)}$ and $1-\theta = \frac{p(q-1)}{q(p-1)}$, so that
\begin{equation}
\begin{aligned}
    \int_0^T\left(\int_{\R^3} \brak{v}^{3q} g^q \dd v\right)^{\frac{2}{2q-3}} \dd \tau &\le \int_0^T\left(\int_{\R^3} \brak{v}^{\frac{3q(p-1)}{p-q}} g \dd v \right)^{\frac{2(p-q)}{(p-1)(2q-3)}}\left(\int_{\R^3} g^p \dd v \right)^{\frac{2q-2}{(p-1)(2q-3)}} \dd \tau\\
    &\le C(T,p,q,r)\norm{g}_{L^\infty(0,T;L^1_m)}^{\frac{2(p-q)}{(p-1)(2q-3)}} \norm{g}_{L^r(0,T;L^p)}^r,
\end{aligned}
\end{equation}
provided $q$ is chosen so that $r > \frac{2(q-1)p}{(p-1)(2q-3)}$. If $r = \infty$ this constraint is satisfied for any $3/2 < q < p$. So, taking $q$ arbitrarily close to $3/2$, we recover the constraint on the number of moments, $m > \frac{9}{2}\frac{p-1}{p-3/2}$ as claimed. On the other hand, if $r < \infty$, taking the limit $q\to p^-$ the constraint becomes $r > \frac{2p}{2p-3}$. So, for $r < \infty$, we can always pick $q$ sufficiently close to $p$ and $m > \frac{3q(p-1)}{p-q}$ so that the constraint is satisfied. Returning to \eqref{eq:ODE_Prodi_Serrin_norm_rigorous2}, we obtain: for each $0 < s < t < T$,
\begin{equation*}
    \int_{\R^3} g(t)^p \dd v  \le  C(p,r,T,m,M,\norm{f}_{L^r(0,T;L^p)})\left(\int_{\R^3} g^p(s) \dd v\right).
\end{equation*}
As in Step 1, taking an average over $s \in (0,t)$, we obtain $\norm{g(t)}_{L^p}^p \le Ct^{-p/r}$. Returning to \eqref{eq:ODE_Prodi_Serrin_rigorous} and taking the limit as $R\to \infty$, we conclude \eqref{eq:E_bound}.

\bibliographystyle{plain}
\bibliography{landau}

\end{document}